\newcommand{\R}{\mathbb R}
\numberwithin{equation}{section}
\newtheorem{theorem}{Theorem}[section]
\newtheorem{proposition}[theorem]{Proposition}
\newtheorem{lemma}[theorem]{Lemma}
\newtheorem*{TA}{Theorem A}
\newtheorem*{TB}{Theorem B}
\begin{document}
\title[On  sharp global well-posedness]{On  sharp global well-posedness and Ill-posedness for a Fifth-order KdV-BBM type equation}

\author{X. Carvajal}
\address{Instituto de Matem\'atica, UFRJ, 21941-909, Rio de Janeiro, RJ, Brazil}
\email{carvajal@im.ufrj.br}
\author{M. Panthee}
\address{Department of Mathematics, IMECC-UNICAMP\\
13083-859, Campinas, S\~ao Paulo, SP,  Brazil}
\email{mpanthee@ime.unicamp.br}

\thanks{MP was partially supported by FAPESP (2016/25864-6) Brazil and CNPq (308131/2017-7) Brazil.}

\keywords{Nonlinear dispersive wave equations, Water wave models, KdV equation, BBM equation, Cauchy problems, local \& global  well-posedness}
\subjclass[2010]{35A01, 35Q53}
\begin{abstract}
We consider the Cauchy problem associated to the recently derived higher order hamiltonian model for unidirectional water waves and prove global existence for given data in the  Sobolev space $H^s$, $s\geq 1$. We also prove an ill-posedness result by showing that the flow-map is not continuous if the given data has Sobolev regularity $s< 1$.  The results obtained in this work are sharp.
\end{abstract}

\maketitle

\section{Introduction}
In this work, we consider the Cauchy problem associated to the recently  introduced  higher order KdV-BBM type model by Bona et al. in  \cite{BCPS1} 
\begin{equation}\label{eq1.6}
\begin{cases}
\eta_t+\eta_x-\gamma_1\eta_{xxt}+\gamma_2\eta_{xxx}+\delta_1\eta_{xxxxt}+\delta_2\eta_{xxxxx}
+
\frac32\eta\eta_x
+ \gamma(\eta^2)_{xxx}
-\frac7{48}(\eta_x^2)_x-\frac18(\eta^3)_x=0,\\
\eta(x,0)=\eta_0(x),
\end{cases}
\end{equation}
where
\begin{equation}\label{gamas}
\begin{cases}
\gamma_1=\frac12(b+d-\rho),\\
\gamma_2=\frac12(a+c+\rho),\\
\delta_1=  \frac14\big[2(b_1+d_1)-(b-d+\rho)\big(\frac16-a-d\big)-d(c-a+\rho)\big],\\
\delta_2=  \frac14\big[2(a_1+c_1) -(c-a+\rho)\big(\frac16-a\big)+\frac1{3}\rho\big],\\
\gamma  =\frac1{24}\big[5-9(b+d)+9\rho\big].
\end{cases}
\end{equation}

The parameters appeared in \eqref{gamas} satisfy 
 $a+b+c+d=\frac13$, $\gamma_1+\gamma_2 =\frac16$, $\gamma=\frac1{24}(5-18\gamma_1)$, $\rho = b+d-\frac16$ and $\delta_2-\delta_1 = \frac{19}{360}-\frac16\gamma_1$ with $\delta_1>0$. 
 
The model in \eqref{eq1.6} describes the unidirectional propagation of water waves. The authors in \cite{BCPS1} used the second order approximation in the two-way model, the so-called $abcd$-system, introduced in \cite{BCS1, BCS2} and obtained a fifth order KdV-BBM type model \eqref{eq1.6}. 
 Also we note that,  the model \eqref{eq1.6} possesses an energy conservation law
\begin{equation}\label{apriori4}
 E(\eta(\cdot,t)):=  \frac12\int_{\mathbb{R}} \eta^2 + \gamma_1 (\eta_x)^2+\delta_1(\eta_{xx})^2\, dx= E(\eta_0),
\end{equation} 
when the parameter $\gamma = \frac7{48}$. In this particular case, the model \eqref{eq1.6} turns out to be hamiltonian. 

There are other higher order models of KdV and BBM type in the literature, see for example \cite{CL, DV, dullin, dullin04, olver1, olver} and references therein. These models are derived either by using Hamiltonian perturbation method \cite{olver1, olver} or by expanding Dirichlet--Neumann operator in the Zakharov--Craig--Sulem formulation \cite{DL}. Also, most of the higher order  KdV-BBM type models existing in the literature are either ill-posed or don't have hamiltonian structure, see for example \cite{ABN1, ABN2, AB} and references therein. The model \eqref{eq1.6} posed on half-line  is also studied in \cite{hongqiu}.

Well-posedness issues for the Cauchy problem \eqref{eq1.6} with initial data in the Sobolev spaces $H^s$ are studied in \cite{BCPS1}. More precisely, for given data  $\eta_0\in H^s (\R$), $s \geq 1$ the authors in \cite{BCPS1} proved the following  local well-posedness  result.

\begin{TA}\label{mainTh1}  Assume $\gamma_1, \delta_1 >0$.  
For any $s\geq 1$ and for given  $\eta_0\in H^s(\R)$, there exist a time $T_{\eta} =\dfrac{c_s}{\|\eta_0\|_{H^s}(1+\|\eta_0\|_{H^s})}$
 and a unique function  $\eta \in C([0,T_{\eta}];H^s)$ which 
 is a solution of the Cauchy problem \eqref{eq1.6}, posed with initial data $\eta_0$.  The solution $\eta$ 
varies continuously in $C([0,T_{\eta}];H^s)$ as $\eta_0$ varies in $H^s$.
 Moreover, for $R>0$, let $B_R$ denotes the ball of radius $R$  centered at the origin in $H^s(\mathbb{R})$ and let $T=T(R)>0$ denotes a uniform existence time for the Cauchy problem \eqref{eq1.6} with $\eta_0 \in B_R$. Then the correspondence $\eta_0 \mapsto \eta$ that associates to $\eta_0$ the solution $\eta$ of the Cauchy problem \eqref{eq1.6} is a real analytic mapping of $B_R$ to $C([0,T], H^s(\mathbb{R})\,)$.
\end{TA}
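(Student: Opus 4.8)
The plan is to recast \eqref{eq1.6} in a form where the BBM-type operator acting on the time derivative is inverted. Writing $\phi(\partial_x)=I-\gamma_1\partial_x^2+\delta_1\partial_x^4$, whose Fourier symbol $\varphi(\xi)=1+\gamma_1\xi^2+\delta_1\xi^4$ satisfies $\varphi(\xi)\gtrsim(1+\xi^2)^2$ thanks to $\gamma_1,\delta_1>0$, the operator $\phi(\partial_x)^{-1}$ is smoothing of order $-4$. Applying it to \eqref{eq1.6} gives an equation of the form $\eta_t=L\eta+\phi(\partial_x)^{-1}N(\eta)$, where $L=\phi(\partial_x)^{-1}(-\partial_x-\gamma_2\partial_x^3-\delta_2\partial_x^5)$ has the purely imaginary symbol $i\,p(\xi)$ with $p(\xi)=(-\xi+\gamma_2\xi^3-\delta_2\xi^5)/\varphi(\xi)$, so the associated group $E(t)=e^{tL}$ is unitary on every $H^s$; and $N(\eta)=-\tfrac32\eta\eta_x-\gamma(\eta^2)_{xxx}+\tfrac{7}{48}(\eta_x^2)_x+\tfrac18(\eta^3)_x$ collects the nonlinear terms. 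I would then pass to the Duhamel formulation and define
\[
\Psi(\eta)(t)=E(t)\eta_0+\int_0^t E(t-t')\,\phi(\partial_x)^{-1}N(\eta(t'))\,dt',
\]
seeking a fixed point in a ball of $C([0,T];H^s)$.

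The heart of the matter is a nonlinear estimate $\|\phi(\partial_x)^{-1}N(\eta)\|_{H^s}\lesssim\|\eta\|_{H^s}^2+\|\eta\|_{H^s}^3$ for $s\geq1$, together with its Lipschitz (difference) analogue. I would treat each term separately, using that $\phi(\partial_x)^{-1}\partial_x^k$ has symbol decaying like $|\xi|^{k-4}$. For $\eta\eta_x$, $(\eta^2)_{xxx}$ and $(\eta^3)_x$ the four-derivative gain leaves at least one spare derivative, so the algebra property of $H^s$ for $s>\tfrac12$ closes the estimate at once. The delicate term — the one that fixes the threshold $s=1$ — is $(\eta_x^2)_x$: here $\phi(\partial_x)^{-1}\partial_x$ gains three derivatives, reducing the task to bounding $\|\eta_x^2\|_{H^{s-3}}$ with $\eta_x\in H^{s-1}$. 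A bilinear Sobolev product law gives $\|\eta_x^2\|_{H^{s-3}}\lesssim\|\eta_x\|_{H^{s-1}}^2\lesssim\|\eta\|_{H^s}^2$ exactly under the constraint $(s-1)+(s-1)\geq0$, i.e. $s\geq1$; at the endpoint $s=1$ this is the elementary chain $\|\eta_x^2\|_{H^{-2}}\leq\|\eta_x^2\|_{H^{-1}}\lesssim\|\eta_x^2\|_{L^1}\leq\|\eta_x\|_{L^2}^2$, using $L^1\hookrightarrow H^{-1}$.

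With these estimates the fixed-point argument is routine. On the ball of radius $\sim\|\eta_0\|_{H^s}$ the nonlinear contribution of $\Psi$ over a time interval of length $T$ is controlled by $T(\|\eta_0\|_{H^s}^2+\|\eta_0\|_{H^s}^3)=T\|\eta_0\|_{H^s}^2(1+\|\eta_0\|_{H^s})$, so taking $T_\eta=c_s/(\|\eta_0\|_{H^s}(1+\|\eta_0\|_{H^s}))$ makes $\Psi$ map the ball into itself and contract; the quadratic-plus-cubic structure of $N$ is precisely what produces the stated form of $T_\eta$. Uniqueness and the continuous variation of $\eta$ with $\eta_0$ follow from the contraction together with the Lipschitz estimate in the usual way.

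Finally, for the real-analyticity of $\eta_0\mapsto\eta$, I would note that for fixed $T=T(R)$ the map $(\eta_0,\eta)\mapsto\eta-\Psi(\eta)$ on $B_R\times C([0,T];H^s)$ is real-analytic — indeed affine in $\eta_0$ and a polynomial of degree three in $\eta$, since $N$ is a finite sum of multilinear terms and $E(t),\phi(\partial_x)^{-1}$ are bounded linear operators. Its partial derivative in $\eta$ at the solution is invertible (again by the contraction estimate), so the analytic implicit function theorem in Banach spaces yields real-analytic dependence of the fixed point on $\eta_0$. I expect the main obstacle to be the endpoint bilinear estimate for $(\eta_x^2)_x$ at $s=1$, as every other step is dictated by the $-4$ order smoothing of $\phi(\partial_x)^{-1}$ and the algebra property of $H^s$.
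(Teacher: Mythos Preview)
Your proposal is correct and follows essentially the same route the paper takes. Note that Theorem~A is quoted from \cite{BCPS1} and not proved in the present paper; however, the machinery the paper lays out in Section~2---the reformulation \eqref{eq1.9}, the Duhamel formula \eqref{eq1.12}, the multilinear estimates in Propositions~\ref{BT1}--\ref{P2}, and the contraction argument exhibited in the proof of Theorem~\ref{mainTh4}---matches your outline step for step. The only cosmetic difference is in the treatment of the critical term $\psi(\partial_x)(\eta_x^2)$: you reduce to $\|\eta_x^2\|_{H^{s-3}}\lesssim\|\eta_x\|_{H^{s-1}}^2$ via a Sobolev product law (using $L^1\hookrightarrow H^{-1}$ at the endpoint $s=1$), whereas the paper packages the same estimate through the auxiliary operator $\omega(\partial_x)$ with symbol $|\xi|/(1+\xi^2)$ in Proposition~\ref{BT1}; the two are equivalent. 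Your appeal to the analytic implicit function theorem for the real-analyticity of the flow, exploiting that the nonlinearity is a cubic polynomial in $\eta$, is the standard argument and is what underlies the analyticity claim used later in the paper (see \eqref{eq2.7}).
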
 

In the case, when the parameter $\gamma=\frac{7}{48}$, conserved quantity \eqref{apriori4} allows one to get an {\em a priori} estimate in $H^2$ which in turn yields global well-posedness in $H^s$, for $s\geq 2$. For given data with certain range of Sobolev regularity below $H^2$, the authors in \cite{BCPS1} used splitting argument introduced in \cite{B1, B} (see also \cite{BS}) to extend the local solution to global in time. This argument is very powerful to get global solution of the Cauchy problem with low regularity data and is used by several authors, see for example \cite{BCPS1, BT, FLP1, FLP2} and references therein. It is worth noting that in \cite{B1, B} energy estimate was used to complete the iteration argument, while in \cite{FLP1} and \cite{FLP2} $L^pL^q$ estimates were used. As in the third order BBM equation \cite{BT}, the authors in  \cite{BCPS1} used energy estimate evolving high frequency part  (rough but small) of the initial data according to the original equation and the low frequency part (smooth but big) according to the difference equation to prove the global well-posedness of the Cauchy problem \eqref{eq1.6}  with given data in  range of Sobolev regularity $\frac32\leq s<2$. More specifically, the global well-posedness result proved in \cite{BCPS1} is the following.

\begin{TB}\label{mainTh3}  Assume $\gamma_1, \delta_1 > 0$.  
 Let $s\geq \frac32$ and $\gamma=\frac7{48}$.  Then  the solution to the Cauchy problem  \eqref{eq1.6}  given by Theorem \ref{mainTh1} can be extended to arbitrarily large time intervals $[0, T]$.  Hence the problem is globally well-posed in this case.
\end{TB}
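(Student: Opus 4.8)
The plan is to reduce the global statement to an a priori bound on $\|\eta(t)\|_{H^s}$ over an arbitrary interval $[0,T]$: once a bound $\|\eta(t)\|_{H^s}\le M(T)$ is in hand, the local existence time in Theorem \ref{mainTh1}, being of the form $c_s/(\|\eta_0\|_{H^s}(1+\|\eta_0\|_{H^s}))$, stays bounded below along the flow, so iterating the local theory in uniform time steps covers $[0,T]$. It is convenient to put \eqref{eq1.6} in the evolution form $\eta_t=-\varphi(D)^{-1}(\partial_x+\gamma_2\partial_x^3+\delta_2\partial_x^5)\eta-\varphi(D)^{-1}N(\eta)$, where $\varphi(\xi)=1+\gamma_1\xi^2+\delta_1\xi^4$ is the symbol of $I-\gamma_1\partial_x^2+\delta_1\partial_x^4$ and $N(\eta)=\frac32\eta\eta_x+\gamma(\eta^2)_{xxx}-\frac7{48}(\eta_x^2)_x-\frac18(\eta^3)_x$. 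Since $\gamma_1,\delta_1>0$ one has $\varphi(\xi)\simeq\langle\xi\rangle^4$; consequently $\varphi(D)^{-1}(\partial_x+\gamma_2\partial_x^3+\delta_2\partial_x^5)$ is a skew-adjoint Fourier multiplier (so it drops out of every $L^2$-based energy identity), and $\varphi(D)^{-1}\partial_x^3$, the worst factor in the nonlinearity, is smoothing of order $-1$. With $\gamma=\frac7{48}$ one has the conservation law $E(\eta)=E(\eta_0)$ with $E\simeq\|\cdot\|_{H^2}^2$; this gives an a priori $H^2$ bound, which in turn controls $\|\eta\|_{L^\infty}$ and renders the $H^s$ energy estimate linear, so that global well-posedness follows at once for every $s\ge2$. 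The whole difficulty lies in the range $\frac32\le s<2$, where $E(\eta_0)$ need not be finite.

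For that range I would use the Bona--Smith splitting. Fix $T>0$, choose a frequency threshold $N$, and split $\eta_0=\psi_0+\phi_0$ with $\psi_0=P_{>N}\eta_0$ (rough but small, $\|\psi_0\|_{H^s}\to0$ as $N\to\infty$) and $\phi_0=P_{\le N}\eta_0\in H^2$ (smooth but large, $\|\phi_0\|_{H^2}\lesssim N^{2-s}\|\eta_0\|_{H^s}$). Following the scheme described in the introduction, I would send the high part through the original equation, letting $u$ solve \eqref{eq1.6} with $u(0)=\psi_0$, and send the low part through the difference equation, setting $z=\eta-u$, so that $z(0)=\phi_0$ and $\varphi(D)z_t+(\partial_x+\gamma_2\partial_x^3+\delta_2\partial_x^5)z+[N(u+z)-N(u)]=0$. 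Because $\psi_0$ is small, the $H^s$ energy estimate for $u$, which by the structure above reads $\frac{d}{dt}\|u\|_{H^s}^2\lesssim\|u\|_{L^\infty}\|u\|_{H^s}^2\lesssim\|u\|_{H^s}^3$, keeps $\|u(t)\|_{H^s}$ comparable to $\|\psi_0\|_{H^s}$ on all of $[0,T]$ provided $N=N(T)$ is chosen so that the Riccati bound has not left the small regime before time $T$. Thus $u$ is uniformly small on $[0,T]$.

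The heart of the matter is the low part $z$, which I would control through $\frac{d}{dt}E(z)=\langle\varphi(D)z_t,z\rangle=-\langle N(u+z)-N(u),z\rangle$, the linear terms dropping by skew-adjointness. The reason for sending the high part through the original equation is the exact identity $\langle N(w),w\rangle=0$ (valid precisely because $\gamma=\frac7{48}$; it is the differential form of \eqref{apriori4}), which kills the pure-$z$ contribution $\langle N(z),z\rangle$. Hence $\frac{d}{dt}E(z)=-\langle N(u+z)-N(u)-N(z),z\rangle$ is a sum of cross terms, each carrying at least one factor of the small function $u$. Distributing derivatives by integration by parts and pairing the derivatives of $u$ into negative-order Sobolev spaces (so that only $\|u\|_{H^{3/2}}\le\|u\|_{H^s}$ is ever required, never $\|u\|_{H^2}$), every cross term that is bilinear in $z$ is bounded by $\|u\|_{H^s}\,\|z\|_{H^2}^2$, i.e. it is linear in $E(z)$ with a small coefficient and produces only the harmless factor $e^{c\|u\|_{H^s}T}\approx1$.

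The main obstacle is the one genuinely cubic-in-$z$ cross term coming from $-\frac18(\eta^3)_x$, namely $\int u_x z^3$. A direct bound only gives $\frac{d}{dt}E(z)\lesssim\|u\|_{H^s}\,E(z)^{3/2}$, and since $E(z(0))^{1/2}=\|\phi_0\|_{H^2}\simeq N^{2-s}$ is large, the resulting Riccati inequality can blow up before time $T$ unless $\|\psi_0\|_{H^s}N^{2-s}$ is small, which fails for general $\eta_0\in H^s$. Taming this term is the technical core. The device is to stop measuring $z$ in the single norm $E(z)^{1/2}$: by Gagliardo--Nirenberg one has $\int u_x z^3\lesssim\|u\|_{H^s}\|z\|_{L^2}^2\|z\|_{H^1}$, which trades most of the large energy norm for the low norm $\|z\|_{L^2}$, and one then propagates $E(z)$ together with a lower-order norm of $z$ simultaneously. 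Because no single estimate globalizes on its own — the low norm is itself coupled back to the large $E(z)$ through the dispersive quadratic term — this is carried out iteratively on a partition of $[0,T]$ into short subintervals, re-splitting the data at each step and keeping the cutoff $N$ commensurate with the step length; this is the energy-based iteration of \cite{B1,B} adapted exactly as in the third-order BBM analysis \cite{BT}. Once this iteration is shown to close one obtains $\sup_{[0,T]}\|z(t)\|_{H^2}<\infty$, whence $\|\eta(t)\|_{H^s}\le\|u(t)\|_{H^s}+\|z(t)\|_{H^2}\le M(T)$ on $[0,T]$ — the a priori bound which, reinserted into Theorem \ref{mainTh1}, extends the local solution to $[0,T]$ and proves global well-posedness.
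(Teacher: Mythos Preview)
Theorem~B is not proved in this paper: it is quoted from \cite{BCPS1} as prior work, and the introduction only records in one sentence the method used there --- evolve the high-frequency (rough but small) part of the data by the original equation and the low-frequency (smooth but large) part by the difference equation, closing the iteration with energy estimates. Your proposal reproduces precisely this architecture: $u$ with data $P_{>N}\eta_0$ solves \eqref{eq1.6}, $z=\eta-u$ with data $P_{\le N}\eta_0$ solves the difference equation, and $E(z)$ is propagated after using $\langle N(w),w\rangle=0$ (valid when $\gamma=\frac7{48}$) to eliminate the pure-$z$ contribution. You have also correctly isolated the term $\int u_x z^3$ as the one that forces the restriction $s\ge\frac32$ in \cite{BCPS1}. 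So, to the extent that a comparison is possible, your outline matches what the paper attributes to \cite{BCPS1}.

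It is worth contrasting this with what the present paper actually proves. Theorem~\ref{mainTh3.1} reverses the roles of the two pieces: the \emph{low}-frequency part is sent through the original equation (it lies in $H^2$, so $E$ controls it globally) and the \emph{high}-frequency part through the difference equation. The gain is that the Duhamel remainder $h$ picks up a full derivative of smoothing (Lemma~\ref{lemah1} gives $h\in H^2$ with $\|h\|_{H^2}\lesssim N^{s-3}$), so the increment of $E$ per step is $O(N^{-1})$ and the iteration closes for every $s\ge1$. In your arrangement the analogous increment is governed by the cubic cross term you flagged, and balancing it against $E(z(0))^{1/2}\sim N^{2-s}$ is exactly what pins the argument of \cite{BCPS1} at $s\ge\frac32$; the reversed splitting in this paper sidesteps that obstruction entirely.
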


Now, a natural question is: whether the results obtained in Theorems A and B are optimal? In this work we will try to respond this question.

Regarding global well-posedness,   in this work too we use the splitting argument as in \cite{BCPS1, BT} but perform the iteration argument other-way around, i.e., evolve the low frequency part according to the original equation and the high frequency part according to the difference equation so that the sum of two will give solution to the original problem. In this way, we get better decay (see \eqref{xeq12} and \eqref{xeq13} below) that allows to extend the local solution to the global one so as the achieve the full range of Sobolev regularity of local well-posedness, $s\geq 1$. This is the first main result of this work and is stated as follows.

\begin{theorem}\label{mainTh3.1}  Assume $\gamma_1, \delta_1 > 0$.  
 Let $1\leq s< 2$ and $\gamma=\frac7{48}$.  Then  for any given $T>0$, the solution to the Cauchy problem  \eqref{eq1.6}  given by Theorem A can be extended to the  time interval $[0, T]$.  Hence the Cauchy problem  \eqref{eq1.6} is globally well-posed in this case. In addition if $\eta_0 \in H^s$, one also has
\begin{equation}\label{Crecinorm1}
\eta(t) -S(t)\eta_0 \in H^2, \quad \textrm{for all time}\; t\in [0, T]
\end{equation}
and
\begin{equation}\label{Crecinorm2}
\sup_{t \in [0,T]}\|\eta(t) -S(t)\eta_0\|_{H^2} \lesssim (1+T)^{2-s},
\end{equation}
where $S(t)$ is as defined in \eqref{St} below.
\end{theorem}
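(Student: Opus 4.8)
The plan is to globalize the local solution of Theorem A by producing an a priori bound on $\sup_{t\in[0,T]}\|\eta(t)\|_{H^s}$; since the local existence time in Theorem A depends only on $\|\eta_0\|_{H^s}$, such a bound immediately yields existence on all of $[0,T]$, and $T>0$ being arbitrary gives the global result. Write the equation as $L\eta_t+A\eta+\mathcal N(\eta)=0$, where $L:=I-\gamma_1\partial_x^2+\delta_1\partial_x^4$ is self-adjoint with $\int \eta L\eta\simeq\|\eta\|_{H^2}^2$ (as $\gamma_1,\delta_1>0$), $A:=\partial_x+\gamma_2\partial_x^3+\delta_2\partial_x^5$ is skew-adjoint on $L^2$, and $\mathcal N(\eta):=\frac32\eta\eta_x+\gamma(\eta^2)_{xxx}-\frac7{48}(\eta_x^2)_x-\frac18(\eta^3)_x$ collects the nonlinear terms; note that $S(t)=e^{-tL^{-1}A}$ is a unitary group on every $H^\sigma$ and that $L^{-1}$ smooths by four derivatives.

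Fix a parameter $N\ge1$, to be chosen as a power of $(1+T)$ at the end, and split $\eta_0=u_0+v_0$ into its low- and high-frequency parts ($|\xi|\le N$ and $|\xi|>N$). Then $\|u_0\|_{H^2}\lesssim N^{2-s}\|\eta_0\|_{H^s}$ while $\|v_0\|_{H^s}\le\|\eta_0\|_{H^s}$ and $\|v_0\|_{H^1}\lesssim N^{-(s-1)}\|\eta_0\|_{H^s}$, the latter being small since $s\ge1$. Let $u$ be the solution of \eqref{eq1.6} with datum $u_0\in H^2$. Because $\gamma=\frac7{48}$, the conservation law \eqref{apriori4} together with the equivalence $E(u)\simeq\|u\|_{H^2}^2$ furnishes a global solution with $\sup_{t}\|u(t)\|_{H^2}\le C\|u_0\|_{H^2}\lesssim N^{2-s}$; a persistence-of-regularity estimate, driven by the uniform control of $\|u\|_{H^2}$ and the embedding $H^2\hookrightarrow W^{1,\infty}$, additionally bounds the higher norms $\|u(t)\|_{H^{2+\theta}}$ in terms of powers of $N$, which will be needed for the commutator terms.

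The heart of the argument is the remainder $z:=\eta-u-S(t)v_0$, which satisfies $z(0)=0$ and $Lz_t+Az+[\mathcal N(\eta)-\mathcal N(u)]=0$ with $\eta=u+S(t)v_0+z$. Pairing with $z$ and using the skew-adjointness of $A$ gives $\frac{d}{dt}\int zLz=-2\int[\mathcal N(\eta)-\mathcal N(u)]\,z\,dx$. I would expand the difference $\mathcal N(\eta)-\mathcal N(u)$ in $w:=S(t)v_0+z$, integrate by parts to distribute the three derivatives in the worst term $(\eta^2)_{xxx}$ so that at most $z_{xx}$ appears, and estimate the result, via $H^2\hookrightarrow W^{1,\infty}$ and the algebra and commutator properties of $H^s$, by $C(\|u\|_{H^{2+\theta}},\|z\|_{H^2})\|z\|_{H^2}^2$ plus source terms depending only on $u$ and $V:=S(t)v_0$. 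The smallness of $\|V\|_{H^1}\lesssim N^{-(s-1)}$, combined with the four-derivative smoothing of $L^{-1}$, is what keeps the source terms and the Gronwall factor under control; a continuity (bootstrap) argument, with $N$ tuned to a suitable power of $(1+T)$ so as to balance the growth of $\|u\|_{H^{2+\theta}}$ against the decay of $\|V\|$ and the length of the time interval, should yield $\sup_{t\in[0,T]}\|z(t)\|_{H^2}\lesssim(1+T)^{2-s}$. This immediately bounds $\sup_{[0,T]}\|\eta\|_{H^s}\le\sup\|u\|_{H^2}+\|v_0\|_{H^s}+\sup\|z\|_{H^2}$, completing the globalization.

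Finally, the smoothing statements follow from this decomposition: since $\eta-S(t)\eta_0=(u-S(t)u_0)+z$, and the Duhamel term $u-S(t)u_0=-\int_0^tS(t-t')L^{-1}\mathcal N(u(t'))\,dt'$ lies in $H^3$ because $u\in H^2$ forces $\mathcal N(u)\in H^{-1}$ and $L^{-1}$ recovers four derivatives, we obtain $\eta(t)-S(t)\eta_0\in H^2$, which is \eqref{Crecinorm1}; tracking the constants in the two pieces produces the rate \eqref{Crecinorm2}. I expect the genuine difficulty to be the $H^2$ energy estimate for $z$ — precisely, taming the top-order contribution of $(\eta^2)_{xxx}$ by commutators (which unavoidably invokes the $N$-dependent higher norms of $u$) and then choosing $N=N(T)$ so that the accumulated estimate lands exactly at the sharp exponent $(1+T)^{2-s}$ rather than something worse.
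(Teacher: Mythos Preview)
Your overall decomposition $\eta_0=u_0+v_0$ and the use of the conserved energy \eqref{apriori4} for $u$ match the paper, but from there the two arguments diverge, and your route has a genuine obstacle.

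The paper does \emph{not} run a single energy/Gronwall argument on $z=\eta-u-S(t)v_0$ over all of $[0,T]$. Instead it iterates on short intervals of length $t_0\sim N^{-2(2-s)}$: on each such interval the Duhamel piece $h$ of $v$ is shown, via the $H^1$--level multilinear estimates (Propositions~\ref{P}, \ref{P1}, \ref{P2} together with their $\partial_x$--variants \eqref{xbilin-1}, \eqref{xtrilin-1}, \eqref{1Sharp1}), to satisfy $\|h(t_0)\|_{H^2}\lesssim N^{s-3}$; this $h$ is then \emph{absorbed into the smooth part} by resetting $u_{k+1}:=u(t_0)+h(t_0)$, $v_{k+1}:=S(t_0)v_k$. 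The point is that the energy increment is additive,
\[
E(u_{k+1})-E(u_k)\ \lesssim\ \|u(t_0)\|_{H^2}\,\|h(t_0)\|_{H^2}\ \sim\ N^{2-s}\cdot N^{s-3}\ =\ N^{-1},
\]
so after $T/t_0\sim TN^{2(2-s)}$ steps the total growth is $TN^{2(2-s)-1}$, which stays $\lesssim N^{2(2-s)}$ exactly when $N\sim T$; this is what produces the polynomial rate $(1+T)^{2-s}$ and also yields \eqref{Crecinorm1}--\eqref{Crecinorm2} directly from $\eta(t)-S(t)\eta_0=u(\tau)-S(t)u_0+h(\tau)$.

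In your scheme the differential inequality for $\int zLz$ carries a coefficient of order $\|u\|_{H^2}$ (or, as you write, $\|u\|_{H^{2+\theta}}$). Concretely, after one integration by parts the contribution of $(\eta_x^2-u_x^2)_x$ contains $\int u_xV_x\,z_x\,dx$, and since $V=S(t)v_0$ lies only in $H^s$ with $s<2$ you cannot place $V_x$ in $L^\infty$; the factor $\|u_x\|_{L^\infty}\lesssim\|u\|_{H^2}\sim N^{2-s}$ is forced into the Gronwall constant. The resulting bound on $\|z(T)\|_{H^2}$ then carries a factor $\exp\big(cN^{2-s}T\big)$, and no choice of $N$ as a power of $(1+T)$ turns this into a polynomial: the ``balance'' you hope for between the large Gronwall factor and the small source $\|V\|_{H^1}\lesssim N^{-(s-1)}$ does not exist, because $e^{cN^{2-s}T}N^{-(s-1)}\to\infty$ for every polynomial $N=N(T)$. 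The same difficulty afflicts the persistence estimate you invoke for $\|u(t)\|_{H^{2+\theta}}$, which itself grows like $\exp\big(c\int_0^t\|u\|_{H^2}\big)$ rather than polynomially.

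The idea you are missing is precisely the re-decomposition at each step, which converts what would be a multiplicative Gronwall loss into an additive increment of the conserved energy. Without it you can still obtain global existence (the $H^1$ multilinear estimates give a uniform local time independent of $N$, so one can iterate Theorem~A), but not the sharp growth estimate \eqref{Crecinorm2}.
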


The second main result of this work addresses the sharpness of the well-posedness issue by proving the following  ill-posedness result.
\begin{theorem}\label{mainTh1.1}  Assume $\gamma_1, \delta_1 >0$.  For any $s< 1$ the Cauchy problem \eqref{eq1.6} is ill-posed in $H^s(\mathbb{R})$ in the sense that there exists $T>0$ such that for any $t\in (0,T)$, the flow-map $\eta_0 \mapsto \eta(t)$ constructed in Theorem A is discontinuous at the origin from $H^s(\mathbb{R})$ endowed with the topology inducted by $H^s(\mathbb{R})$ into $\mathcal{D}'(\mathbb{R})$.
\end{theorem}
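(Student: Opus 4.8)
The plan is to contradict continuity of the flow map at the origin by exhibiting a sequence of smooth initial data that tends to zero in $H^s$ (with $s<1$) but whose solutions, through a resonant quadratic interaction, stabilise to a fixed nonzero distribution. First I would rewrite \eqref{eq1.6} in Duhamel form: applying $(1-\gamma_1\partial_x^2+\delta_1\partial_x^4)^{-1}$ to the equation, the linear propagator $S(t)$ of \eqref{St} becomes the Fourier multiplier with symbol $e^{-it\omega(\xi)}$, where
\begin{equation*}
\omega(\xi)=\frac{\xi-\gamma_2\xi^3+\delta_2\xi^5}{1+\gamma_1\xi^2+\delta_1\xi^4}.
\end{equation*}
Two structural facts drive the argument: $\omega$ is odd, so $\omega(N)+\omega(-N)=0$, which is an exact resonance of counter-propagating high-frequency waves; and among the quadratic terms only $-\frac7{48}(\eta_x^2)_x$ carries two derivatives inside the square, and these survive the smoothing of $(1-\gamma_1\partial_x^2+\delta_1\partial_x^4)^{-1}$ at low output frequencies. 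Since Theorem A makes $\eta_0\mapsto\eta$ real analytic on balls of $H^1$, I would expand the solution as a convergent Taylor series $\eta=\sum_{k\geq1}\eta_k$, with $\eta_1=S(t)\eta_0$ and $\eta_2$ the quadratic iterate obtained by applying Duhamel to $\frac34(\eta_1^2)_x+\gamma(\eta_1^2)_{xxx}-\frac7{48}((\partial_x\eta_1)^2)_x$.

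Next I would fix a Schwartz bump $\psi$ and set $\eta_0^N=\lambda N^{-1}\psi(x)\cos(Nx)$ for a small fixed $\lambda>0$. Then $\|\eta_0^N\|_{H^1}\lesssim\lambda$ stays bounded, so Theorem A provides a common existence time $T>0$ and a valid power-series expansion on $[0,T]$, while $\|\eta_0^N\|_{H^s}\lesssim\lambda N^{s-1}\to0$ for $s<1$. As $S(t)$ is unitary on $H^s$, the linear term obeys $\|S(t)\eta_0^N\|_{H^s}\to0$, hence $S(t)\eta_0^N\to0$ in $\mathcal{D}'$.

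The heart of the proof is the quadratic term. Writing $\eta_1(t')=S(t')\eta_0^N\approx\lambda N^{-1}\psi\cos(Nx-t'\omega(N))$ to leading order gives $\partial_x\eta_1\approx-\lambda\psi\sin(Nx-t'\omega(N))$, so
\begin{equation*}
(\partial_x\eta_1)^2\approx\lambda^2\psi^2\,\frac{1-\cos(2Nx-2t'\omega(N))}{2},
\end{equation*}
whose low-frequency part $\tfrac12\lambda^2\psi^2$ is of size $\lambda^2$ and, crucially, is \emph{time-independent} because the phases of the $\pm N$ components cancel, by $\omega(N)+\omega(-N)=0$. Applying $-\frac7{48}\partial_x$, then $(1-\gamma_1\partial_x^2+\delta_1\partial_x^4)^{-1}\approx\mathrm{Id}$ at low frequency, and integrating in $t'$ over $[0,t]$ should yield $\eta_2^N(t)\to -c\,\lambda^2\,t\,(\psi^2)_x$ in $\mathcal{D}'$ for a nonzero constant $c$ and every fixed $t\in(0,T)$. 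By contrast the low-frequency output of $\frac34(\eta_1^2)_x$ and $\gamma(\eta_1^2)_{xxx}$ is only $O(\lambda^2N^{-2})$, since there the differentiations fall on the slowly varying envelope; these terms are negligible and cannot cancel the dominant contribution.

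Finally I would control the remainder $\sum_{k\geq3}\eta_k^N(t)$: tracking frequencies shows that the cubic interactions (from $-\frac18(\eta^3)_x$ and from the cross terms $\eta_1\eta_2$) do not resonate down into a fixed low-frequency window, so their contribution tends to zero against any fixed test function, and together with the overall smallness in $\lambda$ the whole tail is uniformly negligible in $\mathcal{D}'$. Consequently $\eta^N(t)\to -c\,\lambda^2\,t\,(\psi^2)_x\neq0$ in $\mathcal{D}'$ while $\eta_0^N\to0$ in $H^s$; continuity of the flow map at the origin would force $\eta^N(t)\to S(t)0=0$, a contradiction, proving discontinuity at the origin for every $s<1$. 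The main obstacle is exactly this last stage: making rigorous the low-frequency limit of $\eta_2^N$ (accounting for the dispersion of the envelope $\psi$ and for the exact rather than approximate evolution) and showing that no higher-order term produces a competing $O(1)$ low-frequency contribution. The smallness supplied by the prefactor $\lambda$ and careful frequency-localisation bookkeeping are what should make these estimates close.
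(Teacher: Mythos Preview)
Your strategy is the same as the paper's: exhibit a sequence of data bounded in $H^1$ but vanishing in $H^s$ for $s<1$, show that the second Picard iterate $I_2$ stays away from zero thanks to the resonance $\phi(N)+\phi(-N)=0$ and the two inner derivatives in $-\tfrac{7}{48}(\eta_x^2)_x$, and then use the real-analyticity from Theorem~A to absorb the higher-order terms. The paper, however, takes data with sharp Fourier support $\widehat{\eta_N}=N^{-1}\alpha^{-1/2}(\chi_{I_N}+\chi_{-I_N})$, $I_N=[N,N+2\alpha]$, rather than your modulated bump $\lambda N^{-1}\psi(x)\cos(Nx)$. This buys a cleaner computation: on the low-frequency output set $|\xi|<\alpha$ the phase $\Theta(\xi,\xi_1)=\phi(\xi)-\phi(\xi-\xi_1)-\phi(\xi_1)$ satisfies $|\Theta|\le C\alpha$ exactly (from $|\phi'|\le C$), so choosing $\alpha\sim t^{-1}$ gives a direct lower bound $\|I_2\|_{H^s}^2\gtrsim t^{-1}$ without having to track the dispersion of an envelope.

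Your plan for the tail is more complicated than necessary and is the one place where your sketch diverges from what actually closes. The paper does \emph{no} frequency bookkeeping whatsoever for $k\ge 3$: analyticity on $H^1$-balls gives immediately
\[
\Big\|\sum_{k\ge 3}\epsilon^k I_k(\eta_N^k,t)\Big\|_{H^1}\le \Big(\tfrac{\epsilon}{\epsilon_0}\Big)^3\sum_{k\ge 3}\epsilon_0^k\|I_k(\eta_N^k,t)\|_{H^1}\le C\epsilon^3,
\]
uniformly in $N$, because $\|\eta_N\|_{H^1}\sim 1$. One then fixes $\epsilon$ small so that $C\epsilon^3\ll \epsilon^2\|I_2\|_{H^s}$, and only afterwards sends $N\to\infty$. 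Your proposed route---arguing that cubic interactions ``do not resonate into a fixed low-frequency window''---is both unnecessary and fragile (for instance $\eta_1\eta_2$ has a piece at frequency $\sim N$, and iterating further can recombine to low frequency), whereas the uniform $H^1$ smallness in $\lambda$ alone already suffices. If you replace that paragraph by the one-line analyticity bound, your argument goes through.
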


This negative result makes sense, because if one uses contraction mapping principle to prove local well-posedness, the flow-map turns out to be smooth.
In view of the results obtained in Theorems \ref{mainTh3.1} and \ref{mainTh1.1}, the global well-posedness of the Cauchy problem \eqref{eq1.6} for data $\eta_0\in H^s$, $s\geq 1$ is sharp.

Before leaving this section, we record some notations used in this work along with the structure. We use standard notations of the PDE and explain wherever necessary in their first appearance. The structure of the paper is as follows. In Section \ref{sec2} we prove the global well-posedness result stated in Theorem \ref{mainTh3.1} while Section \ref{sec-3} is devoted to prove the ill-posedness result stated in Theorem \ref{mainTh1.1}.

\section{Global Well-posedness  Results}\label{sec2}

In this section we will consider $\gamma = \frac7{48}$ and $1 \leq s <2$.   Let $T>0$ be large. Our objective is to extend the local solution to the Cauchy problem \eqref{eq1.6} given by Theorem A to a large time interval $[0,T]$, for any given $T>0$.  

We start by  writing  the Cauchy problem \eqref{eq1.6} in the following form
\begin{equation}\label{eq1.9}
\begin{cases}
i\eta_t = \phi(\partial_x)\eta + \tau (\partial_x)\eta^2 - \frac18\psi(\partial_x)\eta^3  -\frac7{48}\psi(\partial_x)\eta_x^2\, ,\\
 \eta(x,0) = \eta_0(x),
 \end{cases}
\end{equation}
where  $\phi(\partial_x)$, $\psi(\partial_x)$ and $\tau(\partial_x)$ are Fourier multiplier operators defined by, 
\begin{equation}\label{phi-D}
\widehat{\phi(\partial_x)f}(\xi):=\phi(\xi)\widehat{f}(\xi), \qquad \widehat{\psi(\partial_x)f}(\xi):=\psi(\xi)\widehat{f}(\xi) \;\;\; {\rm and }\;\;\; \widehat{\tau(\partial_x)f}(\xi):=\tau(\xi)\widehat{f}(\xi), 
\end{equation}
 with symbols
\begin{equation*}
  \phi(\xi)=\frac{\xi(1-\gamma_2\xi^2+\delta_2\xi^4)}{\varphi(\xi)}, \quad \psi(\xi)=\frac{\xi}{\varphi(\xi)} \quad  {\rm and} \quad \tau(\xi)=\frac{3\xi-4\gamma\xi^3}{4\varphi(\xi)}.
\end{equation*}

The common denominator
\begin{equation*}\label{def-phi}
 \varphi(\xi) := 1 + \gamma_1\xi^2+\delta_1\xi^4,
\end{equation*}
is strictly positive because the parameters  $\gamma_1$ and $  \delta_1$ are taken to be positive.

Consider first the linear Cauchy problem associated to \eqref{eq1.9}
\begin{equation}\label{eq1.10}
\begin{cases}
i\eta_t = \phi(\partial_x)\eta,\\
\eta(x,0) = \eta_0(x),
\end{cases}
\end{equation}
whose solution is given  by $\eta(t) = S(t)\eta_0$, where  $S(t)$ is defined via its Fourier transform \begin{equation}\label{St}
\widehat{S(t)\eta_0} = e^{-i\phi(\xi)t}\widehat{\eta_0}.
\end{equation}

Clearly, $S(t)$ is a unitary operator on $H^s$ for any $s \in \R$, so that
\begin{equation}\label{eq1.11}
\|S(t)\eta_0\|_{H^s} = \|\eta_0\|_{H^s},
\end{equation}
for all $t > 0$.
Duhamel's formula allows us to write the Cauchy problem  (\ref{eq1.9}) in the equivalent integral equation form,
\begin{equation}\label{eq1.12}
\eta(x,t) = S(t)\eta_0 -i\int_0^tS(t-t')\Big(\tau(\partial_x)\eta^2 - \frac18 \psi(\partial_x)\eta^3 -\frac7{48}\psi (\partial_x)\eta_x^2\Big)(x, t') dt'.
\end{equation}

Local well-posedness results for the Cauchy problem \eqref{eq1.6}  is obtained   in \cite{BCPS1} via the contraction mapping principle in the space $C([0,T];H^s)$, $s\geq 1$ using the Duhamel's formula (\ref{eq1.12}). To complete the contraction principle argument, the following estimates were crucial. In what follows we record these estimates from  \cite{BCPS1}  along with some  improvements, because they will be needed in our argument.
\begin{proposition}\label{BT1}
 For $s \ge 0$, there is a constant $C = C_s$ for which
\begin{equation}\label{bt}
\|\omega(\partial_x) (u v)\|_{H^s} \le C\|u\|_{H^s}\|v\|_{H^s}
\end{equation}
 where $\omega(\partial_x)$  is the Fourier multiplier operator 
with symbol
\begin{equation} \label{btx0}
\omega(\xi) \, = \, \frac{|\xi|}{1 + \xi^2}.
\end{equation} 
\end{proposition}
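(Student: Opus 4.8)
The plan is to prove the bilinear estimate $\|\omega(\partial_x)(uv)\|_{H^s}\le C_s\|u\|_{H^s}\|v\|_{H^s}$ for $s\ge 0$, where $\omega(\xi)=|\xi|/(1+\xi^2)$. The key observation is that the multiplier $\omega$ gains essentially two derivatives for high frequencies while remaining bounded, so it is genuinely a smoothing operator of order roughly $-1$. Let me think about how to structure this cleanly.

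First, I would move to the Fourier side. Writing $\widehat{uv}=\widehat{u}*\widehat{v}$ and using the weight $\langle\xi\rangle^s=(1+\xi^2)^{s/2}$, the quantity to control is $\|\langle\xi\rangle^s\,\omega(\xi)\,(\widehat u*\widehat v)(\xi)\|_{L^2_\xi}$. The heart of the matter is a pointwise weight comparison on the convolution region $\xi=\xi_1+\xi_2$. The plan is to show
\begin{equation}\label{weightbound}
\langle\xi\rangle^s\,\omega(\xi)\lesssim \langle\xi_1\rangle^s+\langle\xi_2\rangle^s,\qquad \xi=\xi_1+\xi_2.
\end{equation}
To see this, note $\langle\xi\rangle^s\lesssim \langle\xi_1\rangle^s+\langle\xi_2\rangle^s$ always holds (since $\langle\xi\rangle\lesssim\langle\xi_1\rangle\langle\xi_2\rangle$, but more usefully the larger of $|\xi_1|,|\xi_2|$ dominates $|\xi|$), and that $\omega(\xi)=|\xi|/(1+\xi^2)\le 1/2$ is uniformly bounded. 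Combining the uniform bound on $\omega$ with the subadditivity of $\langle\cdot\rangle^s$ gives \eqref{weightbound} directly, and no derivative gain is even needed for $s\ge 0$.

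Next I would run the standard Bourgain/Leibniz splitting. Fixing $\langle\xi\rangle^s\omega(\xi)\lesssim\langle\xi_1\rangle^s+\langle\xi_2\rangle^s$, the $H^s$ norm splits into two symmetric pieces whose Fourier transforms are $(\langle\xi_1\rangle^s|\widehat u|)*|\widehat v|$ and $|\widehat u|*(\langle\xi_2\rangle^s|\widehat v|)$. For the first, I would apply Young's convolution inequality $\|f*g\|_{L^2}\le\|f\|_{L^2}\|g\|_{L^1}$, which bounds it by $\|\langle\xi_1\rangle^s\widehat u\|_{L^2}\,\|\widehat v\|_{L^1}=\|u\|_{H^s}\|\widehat v\|_{L^1}$. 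The difficulty is that $\|\widehat v\|_{L^1}$ is not controlled by $\|v\|_{H^s}$ for small $s$; however, since $\omega$ supplies a genuine decay factor, I would instead keep the $\omega$ factor in play and estimate $\|\langle\xi\rangle^s\omega(\xi)(\widehat u*\widehat v)\|_{L^2}$ via Cauchy--Schwarz in the convolution variable against the weight $\langle\xi_2\rangle^{-s}\cdot\langle\xi_2\rangle^{s}$, using $\int\langle\xi_2\rangle^{-2s}\,d\xi_2<\infty$ only when $s>1/2$. Because the claim covers $s\ge 0$, the main obstacle is precisely the low-regularity range $0\le s\le 1/2$, where $H^s$ does not embed into an algebra.

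To handle all $s\ge 0$ uniformly I expect to exploit the $\omega$ decay more carefully rather than discarding it. The cleanest route I foresee is to write $\langle\xi\rangle^s\omega(\xi)\,(\widehat u*\widehat v)=\int\langle\xi\rangle^s\omega(\xi)\widehat u(\xi_1)\widehat v(\xi-\xi_1)\,d\xi_1$ and bound it by duality: pair against $g\in L^2$ with $\|g\|_{L^2}=1$, obtaining a trilinear form $\iint g(\xi)\langle\xi\rangle^s\omega(\xi)\widehat u(\xi_1)\widehat v(\xi-\xi_1)\,d\xi_1\,d\xi$, and apply the weight distribution \eqref{weightbound} followed by Cauchy--Schwarz. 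The key kernel estimate is then $\sup_\xi\omega(\xi)^2\int(1+|\xi-\xi_1|)^{-2s}\langle\xi\rangle^{2s}\langle\xi_1\rangle^{-2s}\,d\xi_1<\infty$, where the decay of $\omega(\xi)^2\sim\xi^{-2}$ for large $\xi$ compensates for the non-integrability of the low-regularity weight. I expect this kernel bound to be the technical crux, and I would verify it by splitting into the regions $|\xi_1|\lesssim|\xi|$ and $|\xi_1|\gg|\xi|$, using in the first region that $\langle\xi\rangle^{2s}\omega(\xi)^2\lesssim\langle\xi\rangle^{2s-2}$ absorbs into the integral, and in the second that the roles of the frequencies swap so that $\langle\xi_2\rangle^s$ carries the weight. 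Since Proposition~\ref{BT1} is stated as an improvement over the estimates already recorded in \cite{BCPS1}, I would finally remark that the uniform boundedness of $\omega$ together with its quadratic decay is exactly what upgrades the range of validity to the full interval $s\ge 0$.
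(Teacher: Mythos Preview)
The paper does not actually prove this proposition; it simply cites Lemma~3.1 of \cite{BCPS1} (the estimate originates with Bona--Tzvetkov \cite{BT}). Your final remark is therefore off: Proposition~\ref{BT1} is not an improvement over \cite{BCPS1}, it \emph{is} the lemma quoted from \cite{BCPS1}.

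More importantly, your proposed route has a genuine gap in the low-regularity range. The kernel bound you write down,
\[
\sup_{\xi}\;\omega(\xi)^{2}\langle\xi\rangle^{2s}\int_{\R}\langle\xi-\xi_{1}\rangle^{-2s}\langle\xi_{1}\rangle^{-2s}\,d\xi_{1}<\infty,
\]
is false for every $0\le s\le\tfrac12$: the integral in $\xi_{1}$ diverges whenever $2s\le 1$, and no decay in the prefactor $\omega(\xi)^{2}\langle\xi\rangle^{2s}$ (which depends only on $\xi$) can rescue a divergent $\xi_{1}$-integral. So the Cauchy--Schwarz scheme you sketch does not close precisely in the regime you yourself flagged as the obstacle.

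The missing idea is simpler than a kernel estimate: $\omega\in L^{2}(\R)$, since $\int_{\R}\xi^{2}(1+\xi^{2})^{-2}\,d\xi<\infty$. For $s=0$ one writes
\[
\|\omega(\partial_{x})(uv)\|_{L^{2}}=\|\omega\cdot(\widehat u*\widehat v)\|_{L^{2}}\le\|\omega\|_{L^{2}}\,\|\widehat u*\widehat v\|_{L^{\infty}}\le\|\omega\|_{L^{2}}\,\|\widehat u\|_{L^{2}}\|\widehat v\|_{L^{2}},
\]
using Young's inequality in the form $L^{2}*L^{2}\hookrightarrow L^{\infty}$. For general $s\ge 0$ your Leibniz splitting $\langle\xi\rangle^{s}\lesssim\langle\xi_{1}\rangle^{s}+\langle\xi-\xi_{1}\rangle^{s}$ is correct and reduces matters to two terms of the form $\omega\cdot(F*G)$ with $F,G\in L^{2}$, to which the same $\|\omega\|_{L^{2}}\|F*G\|_{L^{\infty}}$ bound applies. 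The smoothing of $\omega$ should be exploited through its square-integrability, not through a pointwise kernel comparison.
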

\begin{proof}
See Lemma 3.1  in \cite{BCPS1}.
\end{proof}

\begin{proposition}\label{P}
For any $s \ge 0$, there is a constant $C = C_s$ such that the inequality
\begin{equation}\label{bilin-1}
\|\tau(\partial_x) (\eta_1 \eta_2)\|_{H^s} \le C \| \eta_1\|_{H^s}  \| \eta_2\|_{H^s}
\end{equation}
and
\begin{equation}\label{xbilin-1}
\|\partial_x\tau(\partial_x) (\eta_1 \eta_2)\|_{H^1} \le C \| \eta_1\|_{H^1}  \| \eta_2\|_{H^1}
\end{equation}
holds, where the operator $\tau(\partial_x)$  is as defined in (\ref{phi-D}).
\end{proposition}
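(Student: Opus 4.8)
The plan is to reduce both inequalities to symbol comparisons with multipliers that are already under control. Recall that $\tau(\xi)=\frac{3\xi-4\gamma\xi^3}{4\varphi(\xi)}$ with $\varphi(\xi)=1+\gamma_1\xi^2+\delta_1\xi^4$ strictly positive, and that the multiplier $\omega(\xi)=\frac{|\xi|}{1+\xi^2}$ of Proposition \ref{BT1} already yields the bilinear bound \eqref{bt}. The whole point is that $\tau(\xi)$ decays like $\omega(\xi)$, namely like $|\xi|^{-1}$, at infinity and vanishes linearly at the origin, whereas $\xi\tau(\xi)$ is merely bounded; these two facts drive the two estimates respectively.

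First I would establish the pointwise bound $|\tau(\xi)|\le C\,\omega(\xi)$ for all $\xi\in\R$. Forming the quotient gives
\[
\frac{|\tau(\xi)|}{\omega(\xi)}=\frac{|3-4\gamma\xi^2|\,(1+\xi^2)}{4\varphi(\xi)},
\]
which is a continuous function of $\xi$ (the denominator never vanishes), tends to $3/4$ as $\xi\to0$ and to $\gamma/\delta_1$ as $|\xi|\to\infty$ because the degree-four terms of numerator and denominator match; hence it is globally bounded by a constant independent of $s$. Inserting this bound into the Fourier-side expression for the Sobolev norm,
\[
\|\tau(\partial_x)g\|_{H^s}^2=\int_\R (1+\xi^2)^s|\tau(\xi)|^2|\widehat g(\xi)|^2\,d\xi\le C^2\,\|\omega(\partial_x)g\|_{H^s}^2,
\]
and then taking $g=\eta_1\eta_2$ and invoking Proposition \ref{BT1} yields \eqref{bilin-1} with the constant $C_s$.

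For the second inequality the relevant symbol is $\xi\tau(\xi)=\frac{3\xi^2-4\gamma\xi^4}{4\varphi(\xi)}$. Here the quartic term of the numerator and the leading quartic term of the denominator again cancel in the ratio, so $\xi\tau(\xi)$ is continuous and converges to $-\gamma/\delta_1$ as $|\xi|\to\infty$; consequently $C_0:=\sup_{\xi\in\R}|\xi\tau(\xi)|<\infty$. This boundedness gives, for any $g$,
\[
\|\partial_x\tau(\partial_x)g\|_{H^1}^2=\int_\R(1+\xi^2)\,|\xi\tau(\xi)|^2\,|\widehat g(\xi)|^2\,d\xi\le C_0^2\,\|g\|_{H^1}^2,
\]
so $\partial_x\tau(\partial_x)$ is bounded on $H^1$. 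Choosing $g=\eta_1\eta_2$ and using that $H^1(\R)$ is a Banach algebra, i.e. $\|\eta_1\eta_2\|_{H^1}\le C\,\|\eta_1\|_{H^1}\|\eta_2\|_{H^1}$, gives \eqref{xbilin-1}.

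The only genuinely substantive step is verifying the two symbol bounds, and the cancellation of the top-order $\xi^4$ contributions is exactly what makes $\xi\tau(\xi)$ bounded rather than growing. I would stress that the second estimate cannot be routed through $\omega$ the way the first one is: since $\omega$ decays at infinity while $\xi\tau$ only stays bounded, one really needs the $H^1$-algebra product estimate rather than the smoothing bilinear bound \eqref{bt}. This is the conceptual crux, though technically the whole argument is a short symbol comparison.
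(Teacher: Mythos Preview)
Your argument is correct and matches the paper's approach: for \eqref{xbilin-1} the paper likewise observes $|\xi\tau(\xi)|\le c$ and then invokes the $H^1$-algebra property, and for \eqref{bilin-1} it simply cites Corollary~3.2 of \cite{BCPS1}, whose content is exactly the symbol comparison $|\tau(\xi)|\lesssim\omega(\xi)$ followed by Proposition~\ref{BT1} that you spell out. Your final remark explaining why \eqref{xbilin-1} cannot be routed through $\omega$ is a nice conceptual addition not made explicit in the paper.
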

\begin{proof}
The proof of the inequality \eqref{bilin-1} is in Corollary 3.2 of \cite{BCPS1}. In order to prove \eqref{xbilin-1}, from definition of operator $\tau(\partial_x)$,  we have
\begin{equation}\label{1xeq13}
\begin{split}
\|\partial_x\tau(\partial_x) (\eta_1 \eta_2)\|_{H^1}=\|\langle \xi \rangle \,\xi \, \tau(\xi) \widehat{(\eta_1 \eta_2)}(\xi)\|_{L^2}
\end{split}
\end{equation}
and 
$$
|\xi \, \tau(\xi)|=\left|\dfrac{3\xi^2-4\gamma\xi^4}{4(1 + \gamma_1\xi^2+\delta_1\xi^4)}\right| \leq c.
$$
Thus,  since $H^1$ is an algebra, we get
\begin{equation*}
\begin{split}
\|\partial_x\tau(\partial_x) (\eta_1 \eta_2)\|_{H^1} \leq c\|\langle \xi \rangle  \widehat{(\eta_1 \eta_2)}(\xi)\|_{L^2}=c\|\eta_1 \eta_2\|_{H^1} \lesssim \|\eta_1 \|_{H^1}\|\eta_2\|_{H^1}.
\end{split}
\end{equation*}
\end{proof}

\begin{proposition}\label{P1}
For $s \ge \frac16$, there is a constant $C = C_s$ such that 
\begin{equation}\label{trilin-1}
\|\psi(\partial_x) (\eta_1 \eta_2 \eta_3)\|_{H^s} \le C \| \eta_1\|_{H^s}  \| \eta_2\|_{H^s}\| \eta_3\|_{H^s}
\end{equation}
and 
\begin{equation}\label{xtrilin-1}
\|\partial_x\psi(\partial_x) (\eta_1 \eta_2 \eta_3)\|_{H^1} \le C \| \eta_1\|_{H^1}  \| \eta_2\|_{H^1}\| \eta_3\|_{H^1}.
\end{equation}
\end{proposition}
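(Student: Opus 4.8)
My plan is, for each inequality, to first bound the Fourier symbol of the operator so as to reduce the trilinear estimate to a product estimate for $\eta_1\eta_2\eta_3$, and then to close that product estimate by a Sobolev embedding (supplemented by duality in the low--regularity range). The estimate \eqref{xtrilin-1} is the routine one, and I would simply imitate the proof of \eqref{xbilin-1}. Writing
\begin{equation*}
\|\partial_x\psi(\partial_x)(\eta_1\eta_2\eta_3)\|_{H^1}=\|\langle\xi\rangle\,\xi\,\psi(\xi)\,\widehat{(\eta_1\eta_2\eta_3)}(\xi)\|_{L^2},
\end{equation*}
I would observe that $|\xi\,\psi(\xi)|=\xi^2/(1+\gamma_1\xi^2+\delta_1\xi^4)\le c$ because the denominator grows like $\xi^4$; pulling out this bounded factor and using that $H^1(\R)$ is an algebra then gives $\|\partial_x\psi(\partial_x)(\eta_1\eta_2\eta_3)\|_{H^1}\le c\,\|\eta_1\eta_2\eta_3\|_{H^1}\lesssim\prod_i\|\eta_i\|_{H^1}$.

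For \eqref{trilin-1} the key feature is that $\psi$ smooths by three derivatives. Since $\langle\xi\rangle^{s}|\psi(\xi)|\lesssim\langle\xi\rangle^{s-3}$, one has $\|\psi(\partial_x)(\eta_1\eta_2\eta_3)\|_{H^s}\lesssim\|\eta_1\eta_2\eta_3\|_{H^{s-3}}$, so it suffices to prove the product estimate
\begin{equation*}
\|\eta_1\eta_2\eta_3\|_{H^{s-3}}\lesssim\|\eta_1\|_{H^s}\|\eta_2\|_{H^s}\|\eta_3\|_{H^s}.
\end{equation*}
When $s\ge\frac12$ this is immediate, since $s-3<s$ and $H^s(\R)$ is an algebra. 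For the genuinely delicate range $\frac16\le s<\frac12$ I would argue by duality: writing $\|\eta_1\eta_2\eta_3\|_{H^{s-3}}=\sup_{\|w\|_{H^{3-s}}\le1}|\int_{\R}\eta_1\eta_2\eta_3\,w\,dx|$ and noting that $3-s>\frac12$ forces $\|w\|_{L^\infty}\lesssim\|w\|_{H^{3-s}}$ while $s\ge\frac16$ gives the endpoint embedding $H^s(\R)\hookrightarrow L^3(\R)$, Hölder with exponents $(3,3,3,\infty)$ yields $|\int_{\R}\eta_1\eta_2\eta_3\,w\,dx|\le\|\eta_1\|_{L^3}\|\eta_2\|_{L^3}\|\eta_3\|_{L^3}\|w\|_{L^\infty}\lesssim\prod_i\|\eta_i\|_{H^s}$.

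I expect the low--regularity part of \eqref{trilin-1} to be the only real obstacle, and the whole difficulty is in using the smoothing efficiently. The crude bound $\|\eta_1\eta_2\eta_3\|_{H^{s-3}}\le\|\eta_1\eta_2\eta_3\|_{L^2}$ with $H^s\hookrightarrow L^6$ would only reach $s\ge\frac13$; to descend to the stated threshold one must put all three derivatives gained by $\psi$ onto the dual variable, placing it in $H^{3-s}\hookrightarrow L^\infty$ and leaving each $\eta_i$ to be controlled merely in $L^3$. The value $s=\frac16$ is then exactly the endpoint $\frac1p=\frac12-s$ of $H^s(\R)\hookrightarrow L^3(\R)$. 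I would also point out that one cannot shortcut this by iterating the bilinear estimate \eqref{bt}, which holds only for $s\ge0$: peeling off one factor leaves a product $\eta_i\eta_j$ to be measured in $H^s$, and no such algebra bound is available for $s<\frac12$.
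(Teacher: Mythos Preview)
Your treatment of \eqref{xtrilin-1} coincides verbatim with the paper's: bound $|\xi\,\psi(\xi)|\le c$ and invoke the algebra property of $H^1$.

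For \eqref{trilin-1} the paper does not argue at all --- it simply quotes Proposition~3.3 of \cite{BCPS1} --- whereas you supply a direct, self-contained proof via the smoothing $\langle\xi\rangle^s|\psi(\xi)|\lesssim\langle\xi\rangle^{s-3}$ followed by a product estimate in $H^{s-3}$. Your duality argument (put the test function in $H^{3-s}\hookrightarrow L^\infty$ and each factor in $L^3$ via the critical embedding $H^{1/6}\hookrightarrow L^3$) is a clean route to the threshold $s=\tfrac16$ and makes transparent why that value is the natural endpoint. This is a genuinely different and more explicit approach than the paper's citation; what you gain is independence from \cite{BCPS1}, at the cost of writing out a Sobolev/duality computation.

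One small technical slip: you appeal to ``$H^s(\R)$ is an algebra'' for $s\ge\tfrac12$, but $H^{1/2}(\R)$ is not an algebra, so the single point $s=\tfrac12$ falls between your two cases. The fix is immediate --- your duality argument actually works throughout $\tfrac16\le s<\tfrac52$ (since $3-s>\tfrac12$ still gives $w\in L^\infty$, and $H^s\hookrightarrow L^3$ continues to hold for $s\ge\tfrac12$), so just extend that argument to cover $s=\tfrac12$ and reserve the algebra property for $s>\tfrac12$.
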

\begin{proof}
The proof of the inequality \eqref{trilin-1} is in Proposition 3.3 of \cite{BCPS1}. In order to prove \eqref{xbilin-1}, from definition of operator $\psi(\partial_x)$,  we have
\begin{equation}\label{xtrilin-11}
\|\partial_x\psi(\partial_x) (\eta_1 \eta_2 \eta_3)\|_{H^1} =\|\langle \xi \rangle \,\xi \, \psi(\xi) \widehat{(\eta_1 \eta_2 \eta_3)}(\xi)\|_{L^2}
\end{equation}
and 
$$
|\xi \, \psi(\xi)|=\left|\dfrac{\xi^2}{1 + \gamma_1\xi^2+\delta_1\xi^4}\right| \leq c.
$$
Thus,  since $H^1$ is an algebra, we get
\begin{equation*}
\begin{split}
\|\partial_x\psi(\partial_x) (\eta_1 \eta_2 \eta_3)\|_{H^1} \leq c\|\langle \xi \rangle  \widehat{(\eta_1 \eta_2 \eta_3)}(\xi)\|_{L^2}=c\|\eta_1 \eta_2 \eta_3\|_{H^1} \lesssim \|\eta_1 \|_{H^1}\|\eta_2\|_{H^1}\|\eta_3\|_{H^1}.
\end{split}
\end{equation*}
\end{proof}

\begin{proposition}\label{P2}
For $s \ge 1$, the inequality 
\begin{equation}\label{Sharp1}
\|\psi(\partial_x)[ (\eta_1)_x(\eta_2)_x]\|_{H^s} \le C \| \eta_1\|_{H^s}\| \eta_2\|_{H^s}
\end{equation}
and
\begin{equation}\label{1Sharp1}
\|\partial_x \psi(\partial_x)[ (\eta_1)_x(\eta_2)_x]\|_{H^1} \le C \| \eta_1\|_{H^1}\| \eta_2\|_{H^1}
\end{equation}
hold.
\end{proposition}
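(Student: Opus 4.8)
The plan is to prove both inequalities by one and the same Fourier-analytic scheme, exploiting that the multiplier $\psi(\xi)=\xi/\varphi(\xi)$ is strongly smoothing (the denominator $\varphi$ has degree four) while the product $(\eta_1)_x(\eta_2)_x$ loses only two derivatives; the resulting net gain of one derivative is exactly what the threshold $s\ge 1$ accommodates. First I would pass to the Fourier side. Writing $\widehat{(\eta_1)_x(\eta_2)_x}(\xi)=-\int_{\R}\xi_1(\xi-\xi_1)\widehat{\eta_1}(\xi_1)\widehat{\eta_2}(\xi-\xi_1)\,d\xi_1$, inequality \eqref{Sharp1} amounts to estimating in $L^2_\xi$ the quantity
\[
\langle\xi\rangle^{s}\,\frac{|\xi|}{\varphi(\xi)}\int_{\R}|\xi_1|\,|\xi-\xi_1|\,|\widehat{\eta_1}(\xi_1)|\,|\widehat{\eta_2}(\xi-\xi_1)|\,d\xi_1 ,
\]
while \eqref{1Sharp1} is identical with the output factor $\langle\xi\rangle^{s}|\xi|/\varphi(\xi)$ replaced by $\langle\xi\rangle\,\xi^2/\varphi(\xi)$.

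The elementary observation that drives the proof is the bound $\dfrac{|\zeta|}{\langle\zeta\rangle^{s}}\le 1$, valid for every $\zeta$ precisely when $s\ge 1$. Applying it to both $\zeta=\xi_1$ and $\zeta=\xi-\xi_1$ absorbs the two derivative factors into the Sobolev weights: setting $f(\xi_1)=\langle\xi_1\rangle^{s}|\widehat{\eta_1}(\xi_1)|$ and $g=\langle\cdot\rangle^{s}|\widehat{\eta_2}|$, so that $\|f\|_{L^2}=\|\eta_1\|_{H^s}$ and $\|g\|_{L^2}=\|\eta_2\|_{H^s}$, the inner integral is dominated pointwise by the convolution $(f*g)(\xi)$. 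By Cauchy--Schwarz one has $(f*g)(\xi)\le\|f\|_{L^2}\|g\|_{L^2}$ \emph{uniformly} in $\xi$, whence the displayed quantity is bounded pointwise by $b_s(\xi)\,\|\eta_1\|_{H^s}\|\eta_2\|_{H^s}$, where $b_s(\xi):=\langle\xi\rangle^{s}|\xi|/\varphi(\xi)$. It then remains only to verify $b_s\in L^2(\R)$: near the origin $b_s(\xi)\sim|\xi|$, while at infinity $b_s(\xi)\sim|\xi|^{s-3}$, which is square-integrable for $s<5/2$ and in particular throughout the relevant range $1\le s<2$. Taking the $L^2_\xi$ norm yields \eqref{Sharp1} with $C_s=\|b_s\|_{L^2}$. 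The very same computation with $\tilde b(\xi)=\langle\xi\rangle\,\xi^2/\varphi(\xi)$ (behaving like $\xi^2$ near $0$ and like $|\xi|^{-1}$ at infinity, hence in $L^2$) gives \eqref{1Sharp1}.

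The step deserving the most care is the choice of estimate for the bilinear multiplier. A naive Schur-type test, requiring $\int K(\xi,\xi_1)^2\,d\xi_1<\infty$ for the full kernel $K$, \emph{fails} at the endpoint $s=1$: in the high--high-to-low interaction, where $|\xi_1|\sim|\xi-\xi_1|$ are large and the Sobolev weights cancel, $K$ ceases to decay in $\xi_1$ and the integral diverges. The resolution is to avoid taking any $L^2$ norm in $\xi_1$: one bounds $K$ by the $\xi_1$-independent, square-integrable function $b_s(\xi)$ and reserves Cauchy--Schwarz for the convolution itself, where it costs nothing. This same feature pinpoints the sharpness of the hypothesis, since the inequality $|\zeta|/\langle\zeta\rangle^{s}\le 1$ — and with it the whole argument — collapses exactly when $s<1$, consistent with the ill-posedness asserted in Theorem \ref{mainTh1.1}. (For $s\ge 5/2$, outside the range used here, $b_s$ fails to be in $L^2$ and one would instead split $\langle\xi\rangle^{s}\lesssim\langle\xi_1\rangle^{s}+\langle\xi-\xi_1\rangle^{s}$; this is not needed for Theorem \ref{mainTh3.1}.)
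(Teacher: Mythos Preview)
Your argument is correct and, for \eqref{1Sharp1}, is essentially the paper's proof unpacked. The paper bounds $|\langle\xi\rangle\,\xi\,\psi(\xi)|\le c\,\omega(\xi)$ and then invokes Proposition~\ref{BT1} at regularity $0$ with $u=(\eta_1)_x$, $v=(\eta_2)_x$; but Proposition~\ref{BT1} at level $0$ is exactly the Cauchy--Schwarz bound $|\widehat{uv}(\xi)|\le\|u\|_{L^2}\|v\|_{L^2}$ followed by $\omega\in L^2(\R)$, which is your mechanism verbatim. The only cosmetic difference is that you absorb the factors $|\xi_1|$, $|\xi-\xi_1|$ into the $H^1$ weights first, whereas the paper keeps them as $(\eta_j)_x$ and applies the bilinear lemma to the derivatives directly.

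For \eqref{Sharp1} the paper gives no argument of its own and simply cites Lemma~3.5 of \cite{BCPS1}, so your self-contained proof adds content. It does, however, only establish the inequality on the range $1\le s<5/2$ (since, as you note, $b_s\notin L^2$ beyond that), whereas the Proposition is stated for all $s\ge 1$. Your parenthetical fix --- splitting $\langle\xi\rangle^{s}\lesssim\langle\xi_1\rangle^{s}+\langle\xi-\xi_1\rangle^{s}$ --- is not quite enough as written: after the split one still carries a stray factor $|\xi_1|$ (respectively $|\xi-\xi_1|$) that is not absorbed by the remaining $H^s$ weight. A cleaner patch for large $s$ is to observe that $|\langle\xi\rangle\psi(\xi)|$ is bounded, so $\psi(\partial_x)\colon H^{s-1}\to H^{s}$ is continuous, and for $s>3/2$ the algebra property of $H^{s-1}$ gives $\|(\eta_1)_x(\eta_2)_x\|_{H^{s-1}}\lesssim\|\eta_1\|_{H^s}\|\eta_2\|_{H^s}$; together with your argument on $1\le s<5/2$ this covers every $s\ge 1$. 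For the purposes of Theorem~\ref{mainTh3.1}, of course, only $1\le s<2$ is used and your proof suffices as written.
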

\begin{proof}
The proof of the inequality \eqref{Sharp1} is in Lemma 3.5 of \cite{BCPS1}. In order to prove \eqref{1Sharp1}, from definition of operator $\psi(\partial_x)$,  we have
\begin{equation}\label{xtrilin-11}
\|\partial_x\psi(\partial_x)[ (\eta_1)_x(\eta_2)_x]\|_{H^1} =\|\langle \xi \rangle \,\xi \, \psi(\xi) \widehat{[ (\eta_1)_x(\eta_2)_x]}(\xi)\|_{L^2}.
\end{equation}
and 
$$
|\langle \xi \rangle \xi \, \psi(\xi)|=\dfrac{ \langle \xi \rangle \xi^2}{1 + \gamma_1\xi^2+\delta_1\xi^4} \leq c \dfrac{|\xi|}{1+\xi^2}=c \,\omega(\xi).
$$
Thus,  using Plancherel identity and Proposition \ref{BT1}, we get
\begin{equation*}
\begin{split}
\|\partial_x\psi(\partial_x) [ (\eta_1)_x(\eta_2)_x]\|_{H^1} \leq c\|\omega(\partial_x)[ (\eta_1)_x(\eta_2)_x]\|_{L^2} \lesssim \| (\eta_1)_x\|_{L^2} \| (\eta_2)_x\|_{L^2}\lesssim \|\eta_1 \|_{H^1}\|\eta_2\|_{H^1}.
\end{split}
\end{equation*}
\end{proof}

As mentioned in the introduction,  we will use the splitting argument introduced in \cite{B1, B} and way earlier in \cite{BS} to get global solution of the Cauchy problem \eqref{eq1.6}. 

Let $\eta_0 \in H^s$, $s\geq 1$,
we split the initial data $\eta_0=u_0+v_0$,  $\widehat{u_0}=\widehat{\eta_0} \chi_{\{|\xi| \leq N\}}$, where $N$ is a large number to be chosen later, it  is easy to see that $ u_0 \in H^\delta$ for any  $\delta \geq s$ and $v_0 \in H^s$. 
In fact we have
\begin{equation}\label{31}
\begin{split}
\|u_0\|_{L^2}&\leq \|\eta_0\|_{L^2},\\
\|u_0\|_{\dot{H}^{\delta}}&\leq \|\eta_0\|_{\dot{H}^s}\,N^{\delta-s},\qquad \delta\geq s,
\end{split}
\end{equation}
and
\begin{equation}\label{32}
\|v_0\|_{H^{\rho}} \leq \|\eta_0\|_{H^s}\,N^{(\rho-s)}, \qquad 0\leq\rho\leq s.
\end{equation}

For each parts  $u_0$ and $v_0$ of $\eta_0$ we associate the Cauchy problems
\begin{equation}\label{xeq1}
\begin{cases}
iu_t = \phi(\partial_x)u +F(u),\\
 u(x,0) = u_0(x),
 \end{cases}
\end{equation}
where $F(u)= \tau (\partial_x)u^2 - \frac18\psi(\partial_x)u^3  -\frac7{48}\psi(\partial_x)u_x^2\,$ and
\begin{equation}\label{xeq2}
\begin{cases}
iv_t = \phi(\partial_x)v + F(u+v)-F(\textcolor{red}{u}),\\
 v(x,0) = v_0(x),
 \end{cases}
\end{equation}
respectively, so that we have $\eta(x,t)=u(x,t)+v(x,t)$, solves the original Cauchy problem \eqref{eq1.6} in the common time interval of existence of $u$ and $v$. In what follows, we prove that there is a time $T_u$ such taht the Cauchy problem \eqref{xeq1} is locally well-posed in $[0, T_u]$. Fixing the solution $u$ of \eqref{xeq1}, we prove that there exists $T_v$ such that the Cauchy problem \eqref{xeq2} is locally well-posed in $[0, T_v]$. In this way, for $t_0\leq\min\{T_u, T_v\}$, $\eta=u+v$ solves the Cauchy problem \eqref{eq1.6} in the time interval $[0, t_0]$ for given data in $H^s$, $s\geq 1$. Our idea is to iterate this process maintaining $t_0$ as the length of existence time in each iteration  to cover any given time interval $[0, T]$.

By Theorem A the Cauchy problem  \eqref{xeq1} is locally well-posed in $H^s$, $s \geq 1$ with existence time given by $T_{u} =\dfrac{c_s}{\|u_0\|_{H^s}(1+\|u_0\|_{H^s})}$ and by  Theorem B globally well-posed in $H^s$, $s \geq 2$. Regarding the well-posedness of the Cauchy problem \eqref{xeq2} with variable coefficients that depend on $u$, we  have the following result.

\begin{theorem}\label{mainTh4}  Assume $\gamma_1, \delta_1 >0$ and $u$ the solution to the Cauchy problem \eqref{xeq1}.  
For any $s\geq 1$ and for given  $v_0\in H^s(\R)$, there exist a time $T_v =\dfrac{c_s}{(\|v_0\|_{H^s}+\|u_0\|_{H^s})(1+\|v_0\|_{H^s}+\|u_0\|_{H^s})}$
 and a unique function  $v \in C([0,T_v];H^s)$ which 
 is a solution of the IVP \eqref{xeq2}, posed with initial data $v_0$.  The solution $v$ 
varies continuously in $C([0,T_v];H^s)$ as $v_0$ varies in $H^s$.
\end{theorem}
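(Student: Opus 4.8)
The plan is to solve the integral (Duhamel) version of \eqref{xeq2},
\begin{equation*}
\Phi(v)(t) := S(t)v_0 -i\int_0^t S(t-t')\big[F(u+v)-F(u)\big](t')\,dt',
\end{equation*}
by the contraction mapping principle in a closed ball of $C([0,T_v];H^s)$, exactly as in the proof of Theorem A. The only new feature is that the nonlinearity now carries variable coefficients through the fixed function $u$, so the first task is to record the a priori control of $u$. Since $\|v_0\|_{H^s}+\|u_0\|_{H^s}\geq \|u_0\|_{H^s}$, the proposed $T_v$ satisfies $T_v\leq T_u$, hence on $[0,T_v]$ the solution $u$ furnished by Theorem A exists and, by the ball in which its fixed point for \eqref{xeq1} is located, obeys $\sup_{t\in[0,T_v]}\|u(t)\|_{H^s}\leq c\,\|u_0\|_{H^s}$. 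Thus $u$ may be treated throughout as a known coefficient of size $\lesssim \|u_0\|_{H^s}$.

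Next I would expand the nonlinearity. Writing out $F(u+v)-F(u)$ produces
\begin{equation*}
\tau(\partial_x)\big(2uv+v^2\big)-\tfrac18\psi(\partial_x)\big(3u^2v+3uv^2+v^3\big)-\tfrac7{48}\psi(\partial_x)\big(2u_xv_x+v_x^2\big),
\end{equation*}
and the key observation is that every summand carries at least one factor of $v$. Applying \eqref{bilin-1} to the $\tau$-terms, \eqref{trilin-1} to the $\psi$-cubic terms, and \eqref{Sharp1} to the derivative terms (the last forcing the restriction $s\geq 1$), and using $\|u\|_{H^s}\lesssim \|u_0\|_{H^s}$, I obtain, for $v$ ranging over a ball of radius $M\sim\|v_0\|_{H^s}$,
\begin{equation*}
\|F(u+v)-F(u)\|_{H^s}\leq C\,A(1+A)\,\|v\|_{H^s},\qquad A:=\|v_0\|_{H^s}+\|u_0\|_{H^s}.
\end{equation*}
An identical computation on $F(u+v_1)-F(u+v_2)$, whose every summand again contains a factor $v_1-v_2$, yields the Lipschitz bound $\|F(u+v_1)-F(u+v_2)\|_{H^s}\leq C\,A(1+A)\|v_1-v_2\|_{H^s}$.

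Since $S(t)$ is unitary on $H^s$ by \eqref{eq1.11}, these two estimates give
\begin{equation*}
\sup_{[0,T_v]}\|\Phi(v)\|_{H^s}\leq \|v_0\|_{H^s}+C\,T_v\,A(1+A)\,M,\qquad \sup_{[0,T_v]}\|\Phi(v_1)-\Phi(v_2)\|_{H^s}\leq C\,T_v\,A(1+A)\sup_{[0,T_v]}\|v_1-v_2\|_{H^s}.
\end{equation*}
Choosing the constant $c_s$ in $T_v=c_s/[A(1+A)]$ small enough makes $C\,T_v\,A(1+A)\leq \tfrac12$, so that $\Phi$ maps the ball of radius $M=2\|v_0\|_{H^s}$ into itself and is a contraction there; the Banach fixed point theorem then supplies the unique $v\in C([0,T_v];H^s)$, the continuity in time being inherited from $S(\cdot)v_0$ and the Duhamel integral. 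Continuous dependence on $v_0$ follows by subtracting the integral equations for two data and running the same estimate. I expect the only genuine obstacle to be the first step — certifying that the coefficient $u$ stays bounded by $\|u_0\|_{H^s}$ on the whole interval $[0,T_v]$ and keeping the quadratic/cubic bookkeeping clean enough to land precisely on the stated factor $A(1+A)$; the nonlinear estimates themselves are immediate from Propositions \ref{P}, \ref{P1} and \ref{P2}.
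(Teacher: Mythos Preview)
Your proposal is correct and follows essentially the same route as the paper's proof: Duhamel formulation, control of $u$ via $\sup_{[0,T_v]}\|u(t)\|_{H^s}\lesssim\|u_0\|_{H^s}$, the identical expansion of $F(u+v)-F(u)$, application of the multilinear estimates \eqref{bilin-1}, \eqref{trilin-1}, \eqref{Sharp1}, and a contraction on the ball of radius $2\|v_0\|_{H^s}$ with the time chosen so that $cT(a+\|u_0\|_{H^s})(1+a+\|u_0\|_{H^s})=\tfrac12$. Your explicit remark that $T_v\leq T_u$ (hence $u$ is defined and bounded on all of $[0,T_v]$) is a point the paper leaves implicit, so your write-up is in fact slightly more careful there.
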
 
\begin{proof}
Using  Duhamel's formula, the equivalent integral equation to \eqref{xeq2} is
\begin{equation}\label{xeq6}
\begin{split}
v(x,t) &= S(t)v_0 -i\int_0^tS(t-t')\Big( F(u+v)-F(u)\Big)(x, t') dt'\\
&=: S(t)v_0 + h(x,t),
 \end{split}
\end{equation}
where 
\begin{equation}\label{xeq7}
 F(u+v)-F(u)= \tau (\partial_x)(v^2+2vu) - \frac18\psi(\partial_x)(3u^2v+3uv^2+v^3) -\frac7{48}\psi(\partial_x)(2u_xv_x+v_x^2).
\end{equation}

Let $u\in C([0,T_u];H^s)$ be the solution of Cauchy problem \eqref{xeq1}, given by Theorem A and satisfying 
\begin{equation}\label{eqofu}
\sup_{t \in [0, T_u]}\|u(t)\|_{H^s} \lesssim \|u_0\|_{H^s}.
\end{equation} 

Let
$$
X_T^a= \{ v \in C([0,T];H^s) :   \quad |||v|||:=\sup_{t \in [0, T]}\|v(t)\|_{H^s}  \leq a \}
$$
where $a:=2\|v_0\|_{H^s}$, and consider an application
$$
\Phi_u(v)(x,t) = S(t)v_0 -i\int_0^tS(t-t')\Big( F(u+v)-F(u)\Big)(x, t') dt'.
$$

We will prove that the application $\Phi_u(v)$ is a contraction on $X_T^a$.
By definition $S(t)$ is a unitary group in $H^s(\R)$. Then for $T\leq T_u$, we have
\begin{equation*}
\begin{split}
\|\Phi_u(v)\|_{H^s}& \leq \|v_0\|_{H^s} +T |||\tau (\partial_x)(v^2+2vu) - \frac18\psi(\partial_x)(3u^2v+3uv^2+v^3) \\
&\quad -\frac7{48}\psi(\partial_x)(2u_xv_x+v_x^2)|||.
\end{split}
\end{equation*}
The inequalities  \eqref{bilin-1}, \eqref{trilin-1}, \eqref{Sharp1} and \eqref{eqofu} yield
\begin{equation*}
\begin{split}
\|\Phi_u(v)\|_{H^s} &\leq \|v_0\|_{H^s} +T |||\tau (\partial_x)(v^2+2vu) - \frac18\psi(\partial_x)(3u^2v+3uv^2+v^3) \\
&\quad -\frac7{48}\psi(\partial_x)(2u_xv_x+v_x^2)||| \\
&\leq   \frac{a}2+cT|||v||| (|||v|||+ \|u_0\|_{H^s}) +cT|||v||| (\|u_0\|_{H^s}^2+\|u_0\|_{H^s} |||v|||+ |||v|||^2)
\\
&\leq   \frac{a}2+cT[a (a+ \|u_0\|_{H^s}) (1+a+ \|u_0\|_{H^s})].
\end{split}
\end{equation*}
If we choose 
$$
cT[(a+ \|u_0\|_{H^s}) (1+a+ \|u_0\|_{H^s})] = \frac12
$$
then  $\|\Phi_u(v)\|_{H^s} \leq a$, showing that $\Phi_u(v)$ maps
 the closed ball $X_T^a$ in $C([0,T];H^s)$ onto itself.   With the same choice of $a$ and $T$ and the same sort of 
estimates, one can prove that the application  $\Phi_u(v)$ is a contraction on $X_T^a$ with contraction constant equal to $\frac12$. The rest of
the proof is standard.
\end{proof}

In what follows, we record a lemma which will paly a fundamental role in the proof of the global well-posedness result.
\begin{lemma}\label{lemah1}
Let $u$ be the solution of the Cauchy problem \eqref{xeq1} and $v$ be the solution of the Cauchy problem \eqref{xeq2}, then $h=h(u,v)$ as defined in \eqref{xeq6} is in $C([0,t_0], H^2)$ and, 
\begin{equation}\label{estim1}
\|u(t_0)\|_{H^2} \lesssim N^{2-s} \quad \textrm{and}\quad \|h(t_0)\|_{H^2} \lesssim N^{s-3},
\end{equation} 
where $t_0 \sim N^{-2(2-s)}$.
\end{lemma}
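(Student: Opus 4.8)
The plan is to prove the two bounds in \eqref{estim1} by different mechanisms: the bound on $\|u(t_0)\|_{H^2}$ will come from the conservation law \eqref{apriori4}, while the bound on $\|h(t_0)\|_{H^2}$ will come from a short-time Duhamel estimate exploiting the smoothing encoded in Propositions \ref{P}, \ref{P1} and \ref{P2}. For $u$ I would argue as follows. Since $\gamma=\tfrac{7}{48}$, the function $u$ solves the Hamiltonian equation \eqref{eq1.6} with datum $u_0\in H^2$, so by Theorem B it exists in $H^2$ on $[0,t_0]$ and the energy is conserved, $E(u(t))=E(u_0)$. As $\gamma_1,\delta_1>0$, the quadratic form $E$ is equivalent to $\|\cdot\|_{H^2}^2$, whence $\|u(t)\|_{H^2}\sim\|u_0\|_{H^2}$ for all $t$. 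Estimate \eqref{31} with $\delta=2$ then gives $\|u_0\|_{H^2}\lesssim\|\eta_0\|_{L^2}+\|\eta_0\|_{\dot H^s}N^{2-s}\lesssim N^{2-s}$, which yields the first bound in \eqref{estim1} uniformly in $t$.

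Next I would record two a priori facts feeding the estimate for $h$. First, since $u_0$ is a frequency truncation of $\eta_0$ one has $\|u_0\|_{H^1}\le\|u_0\|_{H^s}\le\|\eta_0\|_{H^s}\lesssim1$, so by Theorem A at regularity $s=1$ the $H^1$ existence time is of order one and, since $t_0\sim N^{-2(2-s)}\to0$, $\sup_{[0,t_0]}\|u(t)\|_{H^1}\lesssim1$. Second, and this is the quantitative heart of the matter, I claim $\sup_{[0,t_0]}\|v(t)\|_{H^1}\lesssim N^{1-s}$. To see this, note every term of $F(u+v)-F(u)$ in \eqref{xeq7} carries a factor $v$ or $v_x$, so \eqref{bilin-1}, \eqref{trilin-1} and \eqref{Sharp1} (at $s=1$) give
\begin{equation*}
\|F(u+v)-F(u)\|_{H^1}\lesssim\|v\|_{H^1}\big(\|u\|_{H^1}+\|v\|_{H^1}+\|u\|_{H^1}^2+\|u\|_{H^1}\|v\|_{H^1}+\|v\|_{H^1}^2\big).
\end{equation*}
Since $\|u\|_{H^1}\lesssim1$ and $\|v\|_{H^1}\le\|v\|_{H^s}\lesssim1$ on $[0,t_0]$ by Theorem \ref{mainTh4}, the bracket is $\lesssim1$; feeding this into $\|v(t)\|_{H^1}\le\|v_0\|_{H^1}+t_0\sup_{[0,t_0]}\|F(u+v)-F(u)\|_{H^1}$, using $\|v_0\|_{H^1}\lesssim N^{1-s}$ from \eqref{32} with $\rho=1$, and absorbing the $t_0$ term for $N$ large closes the bootstrap.

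Finally, for $h$ I would use \eqref{xeq6} and the unitarity of $S(t)$ on $H^2$ to write $\|h(t_0)\|_{H^2}\le t_0\sup_{[0,t_0]}\|F(u+v)-F(u)\|_{H^2}$, and then bound the integrand via $\|g\|_{H^2}\lesssim\|g\|_{H^1}+\|\partial_xg\|_{H^1}$, controlling the first piece by \eqref{bilin-1}, \eqref{trilin-1}, \eqref{Sharp1} at $s=1$ and the second by the improved inequalities \eqref{xbilin-1}, \eqref{xtrilin-1}, \eqref{1Sharp1}. Crucially all of these involve only $H^1$ norms of the factors, so exactly as above $\|F(u+v)-F(u)\|_{H^2}\lesssim\|v\|_{H^1}\lesssim N^{1-s}$, giving $\|h(t_0)\|_{H^2}\lesssim t_0N^{1-s}\sim N^{-2(2-s)}N^{1-s}=N^{s-3}$. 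The continuity $h\in C([0,t_0],H^2)$ would follow from the same estimates applied to the differences $h(t)-h(t')$ together with strong continuity of $S(\cdot)$, which is routine.

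The hard part is the sharp $H^1$ bound $\|v(t)\|_{H^1}\lesssim N^{1-s}$: the local theory only supplies the order-one bound $\|v\|_{H^s}\lesssim1$, and settling for $\|v\|_{H^1}\lesssim1$ would give merely $\|h(t_0)\|_{H^2}\lesssim N^{-2(2-s)}$, which is weaker than $N^{s-3}$ once $s>1$. For the bootstrap recovering the decay $N^{1-s}$ to close, it is essential that only $\|u\|_{H^1}\lesssim1$ enters, and not the large quantity $\|u\|_{H^2}\sim N^{2-s}$, which is precisely what the improved inequalities \eqref{xbilin-1}, \eqref{xtrilin-1} and \eqref{1Sharp1} guarantee, and that $t_0$ is small enough to lie below $\min\{T_u,T_v\}$ and to absorb the Duhamel contribution.
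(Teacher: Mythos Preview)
Your proposal is correct and follows essentially the same route as the paper: energy conservation for the $H^2$ bound on $u$, and a short-time Duhamel estimate for $h$ using $\|g\|_{H^2}\sim\|g\|_{H^1}+\|\partial_x g\|_{H^1}$ together with Propositions~\ref{P}, \ref{P1}, \ref{P2} (including the improved inequalities \eqref{xbilin-1}, \eqref{xtrilin-1}, \eqref{1Sharp1}) so that only $H^1$ norms of the factors enter. The one place you are more explicit than the paper is the key bound $\sup_{[0,t_0]}\|v\|_{H^1}\lesssim N^{1-s}$: the paper simply asserts this follows from ``the local theory and inequalities \eqref{31}--\eqref{32}'' (i.e.\ rerunning the fixed-point argument of Theorem~\ref{mainTh4} at regularity $1$, where $\|v_0\|_{H^1}+\|u_0\|_{H^1}\lesssim1$ gives an existence time $\gtrsim1$ and $\|v(t)\|_{H^1}\le 2\|v_0\|_{H^1}\lesssim N^{1-s}$), whereas you obtain it by an absorption argument directly from the Duhamel formula using the a priori $H^s$ bound on $v$ to control the bracket; the two are equivalent.
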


\begin{proof}
Observe that the energy conservation law \eqref{apriori4}, gives
$$
\|u(t_0)\|_{H^2} \sim \sqrt{E(u(t_0))}=\sqrt{E(u_0)}\sim \|u_0\|_{H^2} \lesssim N^{2-s}.
$$
On the other hand, from \eqref{xeq6} and \eqref{xeq7},  we have  for $ 1 \leq \delta \leq s $
\begin{equation}
\begin{split}
\|h(t_0)\|_{H^{\delta}} & = \Big \|\int_0^{t_0} S(-t')\Big( F(u+v)-F(u)\Big)(x, t') dt'\Big\|_{H^{\delta}} \\
&\leq \int_0^{t_0}\|S(-t')\Big( F(u+v)-F(u)\Big)(x, t') dt'\|_{H^{\delta}} \\
&\leq \int_0^{t_0}(\|\tau (\partial_x)(v^2+2vu) \|_{H^{\delta}}+\frac18\|\psi(\partial_x)(3u^2v+3uv^2+v^3) \|_{H^{\delta}}\\
&\quad +\frac7{48}\|\psi(\partial_x)(2u_xv_x+v_x^2)\|_{H^{\delta}}) dt'.
\end{split}
\end{equation}

Now, using the Propositions \ref{P}, \ref{P1} and \ref{P2}, we arrive to
\begin{equation}
\begin{split}
\|h(t_0)\|_{H^{\delta}} 
\lesssim &\int_0^{t_0}(\|v\|_{H^{\delta}}^2+\|v\|_{H^{\delta}}\|u\|_{H^{\delta}}+\|u\|_{H^{\delta}}^2\|v\|_{H^{\delta}}+\|u\|_{H^{\delta}}\|v\|_{H^{\delta}}^2+\|v\|_{H^{\delta}}^3 )dt'.
\end{split}
\end{equation}

The local theory and the inequalities \eqref{31} and \eqref{32} imply $\|v\|_{H^{\delta}} \lesssim N^{\delta -s}$ and $\|u\|_{H^{\delta}} \lesssim c$.

 Thus, if $\delta =1$ and $s \geq1$, we have
\begin{equation}\label{eq17}
\begin{split}
\|h(t_0)\|_{H^{1}} 
&\lesssim \int_0^{t_0}(N^{2(1 -s)}+N^{(1 -s)}+N^{3(1 -s)})dt' \\
&\lesssim  t_0(N^{2(1 -s)}+N^{(1 -s)}+N^{3(1 -s)})\\
&\lesssim  N^{-2(2-s)} (N^{2(1 -s)}+N^{(1 -s)}+N^{3(1 -s)})\\
&\lesssim N^{s-3}+N^{-2}+N^{-s-1}\\
&\lesssim N^{s-3}.
\end{split}
\end{equation}

Furthermore
\begin{equation}
\begin{split}
\|\partial_x h(t_0)\|_{H^{1}} \leq &\int_0^{t_0}(\|\partial_x \tau (\partial_x)(v^2+2vu) \|_{H^{1}}+\frac18\|\partial_x\psi(\partial_x)(3u^2v+3uv^2+v^3) \|_{H^{1}}\\
&+\frac7{48}\|\partial_x\psi(\partial_x)(2u_xv_x+v_x^2)\|_{H^{1}}) dt'.
\end{split}
\end{equation}

Using the Propositions \ref{P}, \ref{P1} and \ref{P2}, we obtain
\begin{equation}
\begin{split}
\|\partial_x h(t_0)\|_{H^{1}} \lesssim &\int_0^{t_0}(\|v\|_{H^{1}}^2+\|v\|_{H^{1}}\|u\|_{H^{1}} +\|u\|_{H^{1}}^2\|v\|_{H^{1}}+\|v\|_{H^{1}}^2\|u\|_{H^{1}}+\|v\|_{H^{1}}^3) dt'.
\end{split}
\end{equation}

Similarly, as in \eqref{eq17} one can prove
\begin{equation}\label{eq18}
\begin{split}
\|\partial_x h(t_0)\|_{H^{1}} 
\lesssim &N^{s-3}.
\end{split}
\end{equation}

Combining \eqref{eq17} and \eqref{eq18}, one gets
\begin{equation}
\begin{split}
\|h(t_0)\|_{H^{2}} \sim \|h(t_0)\|_{H^{1}}+ \|\partial_xh(t_0)\|_{H^{1}} \lesssim N^{s-3},
\end{split}
\end{equation}
which completes the proof of lemma.
\end{proof}

Now we are in position to supply the proof of the first main result of this work.

\begin{proof}[Proof of Theorem \ref{mainTh3.1}]  Let $\eta_0\in H^s$, $1\leq s< 2$ and $T>0$ be any given number. As discussed above, we split the initial data $\eta_0=u_0+v_0$ so that $u_0$ and $v_0$ satisfy the growth conditions \eqref{31} and \eqref{32} respectively.

We evolve $u_0$ according to the Cauchy problem \eqref{xeq1} and $v_0$ according to the Cauchy problem \eqref{xeq2}. Using Theorems A and \ref{mainTh4} we respectively obtain solutions $u$ and $v$ so that the sum $\eta=u+v$ solves the Cauchy problem \eqref{eq1.6} in the common time interval of existence of $u$ and $v$.

Observe that from \eqref{apriori4} and  \eqref{31},  we have
\begin{equation}\label{xeq4}
E(u(t))=E(u_0) \sim\|u_0\|_{H^2}^2 \lesssim N^{2(2-s)}
\end{equation}
and the  local existence time in $H^2$, given in Theorem A is estimated by
\begin{equation}\label{xeq5}
\begin{split}
T_u&=\dfrac{c_s}{\|u_0\|_{H^2}(1+\|u_0\|_{H^2})}\\
 &\geq \dfrac{c_s}{ N^{(2-s)}(1+ N^{(2-s)})} \\
&\geq \dfrac{c_s}{ N^{2(2-s)}}=:t_0.
 \end{split}
\end{equation}

We observe that $(\|v_0\|_{H^s}+\|u_0\|_{H^s})(1+\|v_0\|_{H^s}+\|u_0\|_{H^s})\lesssim \|\eta_0\|_{H^s}(1+\|\eta_0\|_{H^s})=C_s$, therefore
\begin{equation}\label{uxeq5}
T_v =\dfrac{c_s}{(\|v_0\|_{H^s}+\|u_0\|_{H^s})(1+\|v_0\|_{H^s}+\|u_0\|_{H^s})} \geq \dfrac{c_s}{C_s} \geq t_0.
\end{equation}

The inequalities \eqref{xeq5} and \eqref{uxeq5} imply that the solutions $u$ and $v$ are both defined in the same time interval $[0,t_0]$.

The inequality \eqref{xeq4} implies that
\begin{equation}\label{0xeq4}
 t_0 \lesssim \dfrac{1}{E(u_0)}.
\end{equation}

In view of \eqref{xeq6} the local solution $v \in H^s$ is given by 
\begin{equation}\label{xeq6-1}
v(x,t) = S(t)v_0 + h(x,t).
\end{equation}
Therefore, in the time $t_0 \sim  N^{-2(2-s)}$, the solution $\eta$ can be written as
\begin{equation}\label{00xeq9}
\eta(t)=u(t)+ v(t) = u(t)+ S(t)v_0 +h(t), \quad t\in [0,t_0].
\end{equation}

At the time $t=t_0$, we have
\begin{equation}\label{xeq9}
\eta(t_0)=u(t_0)+ S(t_0)v_0 +h(t_0)=:u_1+v_1,
\end{equation}
where 
\begin{equation}\label{decomp1}
u_1=u(t_0) +h(t_0) \quad \textrm{and} \quad v_1 =S(t_0)v_0.
\end{equation} 
In the time $t_0$  we consider the new initial data $u_1$, $v_1$ and evolve them according to the Cauchy problems \eqref{xeq1} and \eqref{xeq2} respectively, and continue iterating this process. In each iteration we consider the decomposition of the initial data as in \eqref{decomp1}. Therefore $v_1, \dots, v_k = S(k t_0)v_0$ have the same $H^s$-norm of $v_0$ i.e. $\|v_k\|_{H^s} = \|v_0\|_{H^s}$. We expect that $u_1, \dots, u_k$ also have the same properties of $u_0$, i.e., the same growth properties as that of $u_0$, in order to ensure the same existence time interval $[0,t_0]$ in each iteration and glue them to cover the whole time interval $[0,T]$, then extending the solution of the systems \eqref{xeq1} and \eqref{xeq2}. This fact is proved by induction. Here we will prove only the case $k = 1$ and note that a similar argument works in the general case. In order to attain this goal we will use the energy conservation \eqref{apriori4}.

We have 
\begin{equation}\label{xeq10}
\begin{split}
E(u_1)&=E(u(t_0) +h(t_0))=E(u(t_0))+ \big[E(u(t_0) +h(t_0))-E(u(t_0))\big]\\
 &=:E(u(t_0))+\mathcal{X}.
\end{split}
\end{equation}

Now,
\begin{equation}\label{xeq11}
\begin{split}
\mathcal{X}&=2 \int_{\R}u(t_0)h(t_0) dx+\int_{\R}h(t_0)^2dx+2\gamma_1 \int_{\R}u_x(t_0)h_x(t_0) dx\\
 & \quad+\gamma_1\int_{\R}h_x(t_0)^2dx+2 \delta_1\int_{\R}u_{xx}(t_0)h_{xx}(t_0) dx+\delta_1\int_{\R}h_{xx}(t_0)^2dx\\
&\leq  2\|u(t_0)\|_{L^2}\|h(t_0)\|_{L^2}+\|h(t_0)\|_{L^2}^2+ \gamma_1 (2 \|u_x(t_0)\|_{L^2}\|h_x(t_0)\|_{L^2}+\|h_x(t_0)\|_{L^2}^2)\\
&\quad+\delta_1(2\|u_{xx}(t_0)\|_{L^2}\|h_{xx}(t_0)\|_{L^2}+\|h_{xx}(t_0)\|_{L^2}^2).
\end{split}
\end{equation}

Using the Lemma \ref{lemah1}, the estimate \eqref{xeq11} yields
\begin{equation}\label{xeq12}
\begin{split}
\mathcal{X} &\lesssim   N^{2-s}N^{s-3}+N^{2(s-3)}+ (\gamma_1+\delta_1) (N^{2-s}N^{s-3}+N^{2(s-3)})\\
& \lesssim   N^{-1}.
\end{split}
\end{equation}
Combining \eqref{xeq10}, \eqref{xeq11} and \eqref{xeq12}, we conclude that 
\begin{equation}\label{xeq13}
\begin{split}
E(u_1) & \leq E(u(t_0))+cN^{-1}.
\end{split}
\end{equation}
The number of steps in the iteration to cover the given time interval $[0, T]$ is
$$\dfrac{T }{t_0}\sim TN^{2(2-s)}.$$
Thus, by \eqref{xeq13}, for this to happen we need that
$$
TN^{2(2-s)} N^{-1}  \lesssim N^{2(2-s)},
$$
which is possible if $1\leq s < 2$ and
$N = N(T) = T$.

From  the discussion  above we see that in each iteration  one has
$$
\|u_k\|_{H^2}^2 \sim E(u_k) \lesssim N^{2(2-s)},\;{\textrm{uniformly and}}\quad \|v_k\|_{H^2}=\|v_0\|_{H^2}.
$$

 Finally, let $t \in [0,T]$, then there exist $k \geq0 $ an  integer, such that $t=k t_0 +\tau$, for some $\tau \in [0,t_0]$. In the  $k^{{\textrm{th}}}$-iteration (see equality \eqref{00xeq9}), one gets
\begin{equation}\label{xeq13-1}
\begin{split}
\eta(t) &=u(\tau)+ S(\tau)v_k +h(\tau) \\
&=u(\tau)+ S(\tau)S(kt_0)v_0 +h(\tau) \\
 &= S(t)\eta_0+ u(\tau)- S(t)u_0+h(\tau).
\end{split}
\end{equation}

Thus
\begin{equation}\label{xeq13-2}
\begin{split}
\eta(t) - S(t)\eta_0= u(\tau)- S(t)u_0+h(\tau),
\end{split}
\end{equation}
and this completes  the proof of Theorem \ref{mainTh3.1}. 
 
\end{proof}

\section{Ill-posedness Result}\label{sec-3}
In this section, we  consider the ill-posedness issue for the Cauchy problem \eqref{eq1.6} and prove the result stated in Theorem \ref{mainTh1.1}. The ill-posedness result stated in Theorem \ref{mainTh1.1} makes sense, because if one uses contraction mapping principle to prove local well-posedness, the flow-map turns out to be smooth.  The main idea in the proof  is to find an appropriate initial data $\eta_N$  and follow the argument introduced in \cite{BejT, MV, MP-1} to show that the flow-map cannot be continuous.

\begin{proof}[Proof of Theorem \ref{mainTh1.1}] 
Let $N\gg 1$, $\alpha$ a constant to be chosen later, $I_N:= [N, N+2\alpha]$ and  define an initial data via the Fourier transform
\begin{equation}\label{eq4.2}
\widehat{\eta_N}(\xi):= N^{-1}\alpha^{-\frac12}\big[\chi_{\{I_N\}}(\xi) + \chi_{\{-I_N\}}(\xi)\big].
\end{equation}
A simple calculation shows that $\|\eta_N\|_{H^1}\sim 1$ and $\lim_{N\to \infty}\|\eta_N\|_{H^s}=0$ if $s<1$.

First, we use this example to show the failure of continuity of the second iteration of the Picard scheme
\begin{equation}\label{eq4.04}
I_2(h,h,x,t):=\int_0^tS(t-t')\Big(\tau(\partial_x)\big[S(t')h\big]^2
 -\frac7{48} \psi(\partial_x)\partial_x\big[S(t')h\big]^2\Big)  dt'
\end{equation}
at the origin from $H^s(\R)$ to even $\mathcal{D}'(\R)$ for $s<1$.  

In what follows, considering $h=\eta_N$, we calculate the $H^s$ norm of $I_2(\eta_N,\eta_N,x,t)=:I_2$, $0< t < T$.
Taking the Fourier transform in the space variable $x$, we get
\begin{equation}\label{eq4.05}
\begin{split}
\mathcal{F}_x  (I_2\,)(\xi) &=\int_0^t e^{-i(t-t')\phi(\xi)}\left(\tau(\xi)\widehat{\big(S(t')\eta_N\big)^2}(\xi)-\frac7{24}\psi(\xi)\widehat{\big(S(t')\partial_x\eta_N\big)^2}(\xi)\right)  dt'\\
&= \int_0^t e^{-i(t-t')\phi(\xi)}\left(\frac{3\xi-4\gamma\xi^3}{4\varphi(\xi)}\widehat{\big(S(t')\eta_N\big)^2}(\xi)-\frac7{48}\frac{\xi}{\varphi(\xi)}\widehat{\big(S(t')\partial_x\eta_N\big)^2}(\xi)\right)  dt'\\
&= \int_0^t e^{-i(t-t')\phi(\xi)}\left(\frac{3\xi-4\gamma\xi^3}{4\varphi(\xi)}\int_{\R}e^{-it'\phi(\xi-\xi_1)}\widehat{\eta_N}(\xi-\xi_1)e^{-it'\phi(\xi_1)}\widehat{\eta_N}(\xi_1) d\xi_1\right.\\
&\qquad\qquad\qquad  \left.-\frac7{48}\frac{\xi}{\varphi(\xi)}\int_{\R}e^{-it'\phi(\xi-\xi_1)}(\xi-\xi_1)\widehat{\eta_N}(\xi-\xi_1)e^{-it'\phi(\xi_1)}\xi_1\widehat{\eta_N}(\xi_1) d\xi_1\right)  dt'\\
&= \int_{\R} e^{-it\phi(\xi)}\left(\frac{3\xi-4\gamma\xi^3}{4\varphi(\xi)}-\frac7{48}\frac{\xi\xi_1(\xi-\xi_1)}{\varphi(\xi)}\right)\widehat{\eta_N}(\xi_1)\widehat{\eta_N}(\xi-\xi_1)   \int_0^te^{it'[\phi(\xi)-\phi(\xi-\xi_1)-\phi(\xi_1)]} dt'd\xi_1.
\end{split}
\end{equation}

We have that
\begin{equation}\label{eq4.06}
 \int_0^te^{it[\phi(\xi)-\phi(\xi-\xi_1)-\phi(\xi_1)]} dt'd\xi_1 = 
\frac{e^{it[\phi(\xi)-\phi(\xi-\xi_1)-\phi(\xi_1)]}-1}{i[\phi(\xi)-\phi(\xi-\xi_1)-\phi(\xi_1)]}.
\end{equation}

Now, inserting \eqref{eq4.06} in \eqref{eq4.05}, one obtains
\begin{equation}\label{eq4.07}
\mathcal{F}_x (I_2)(\xi)
=-i\int_{\R}\frac{\xi}{4\varphi(\xi)}\left(3-4\gamma\xi^2-\frac7{12}\xi_1(\xi-\xi_1)\right)\widehat{\eta_N}(\xi-\xi_1) \widehat{\eta_N}(\xi_1)
 e^{-it\phi(\xi)}\frac{e^{it\Theta(\xi, \xi_1)}-1}{\Theta(\xi, \xi_1)}d\xi_1,
\end{equation}
where $\Theta(\xi, \xi_1):=\phi(\xi)-\phi(\xi-\xi_1)-\phi(\xi_1)$.

Let us define a set $K$ by
\begin{equation}\label{def-K}
K:= \{ \xi_1: \;\xi-\xi_1 \in I_N,\;\, \xi_1\in -I_N\}\cup \{ \xi_1:\; \xi_1 \in I_N, \;\,\xi-\xi_1\in -I_N\}.
\end{equation}
With this notation, one infers for $|\xi| < \alpha$, that 
\begin{equation}\label{eq4.071}
\mathcal{F}_x (I_2)(\xi)
=-iN^{-2}\alpha^{-1} e^{-it\phi(\xi)}\int_{K}\frac{\xi}{4\varphi(\xi)}\left(3-4\gamma\xi^2-\frac7{12}\xi_1(\xi-\xi_1)\right)
\frac{e^{it\Theta(\xi, \xi_1)}-1}{\Theta(\xi, \xi_1)}d\xi_1.
\end{equation}

Therefore,
\begin{equation}\label{eqil1}
\| I_2\|_{H^s}^2 \gtrsim \int_{-\alpha/2}^{\alpha/2}\langle \xi\rangle^{2s} \alpha^{-2}N^{-4} 
\left| \int_{K}\frac{\xi g(\xi, \xi_1)}{4\varphi(\xi)} \frac{e^{it\Theta(\xi, \xi_1)}-1}{\Theta(\xi, \xi_1)}d\xi_1\right|^2 d\xi,
\end{equation}
where $g(\xi, \xi_1):=3-4\gamma\xi^2-\frac7{12}\xi_1(\xi-\xi_1)$.

We have that $|K|\gtrsim \alpha$. Now, we move to show that the magnitude of $\Theta(\xi, \xi_1)$ in the set $K$ is very very small.

Note that, the phase function $\phi(\xi)$ is odd. Consider the parameters $\gamma_1, \gamma_2, \delta_1$ and $\delta_2$ all positive. With these considerations, one can write
\begin{equation}\label{Theta-1}
\begin{split}
\Theta(\xi, \xi_1) &= \int_0^1\frac{d}{dt}[\phi(t\xi)-\phi(t\xi-\xi_1)]dt\\
& =\int_0^1[\xi\phi'(t\xi)-\xi\phi'(t\xi-\xi_1)]dt.
\end{split}
\end{equation}

Using triangle inequality, one easily obtains from \eqref{Theta-1} that
\begin{equation}\label{Theta-2}
|\Theta(\xi, \xi_1)| \leq |\xi|\int_0^1|\phi'(t\xi)|dt+ |\xi|\int_0^1|\phi'(t\xi-\xi_1)|dt.
\end{equation}

Let \begin{equation}\label{p-1}
p(\xi):=\frac{1-\gamma_2\xi^2+\delta_2\xi^4}{1+\gamma_1\xi^2+\delta_1\xi^4},
\end{equation}
 so that, one has $\phi(\xi) =\xi p(\xi)$ and $\phi'(\xi) = p(\xi)+\xi p'(\xi)$. Observe that
\begin{equation}\label{p-2}
p'(\xi)=\frac{-2(\gamma_1+\gamma_2)\xi+4(\delta_2-\delta_1)\xi^3+2(\gamma_2\delta_1+\gamma_1\delta_2)\xi^5}{(1+\gamma_1\xi^2+\delta_1\xi^4)^2}.
\end{equation}

For any $x\in \R$, one can infer $|\phi'(x)|\leq c$. In the domain of integration in the RHS of \eqref{eqil1}, we have $|\xi| \leq \alpha/2$ . Therefore, in the light of the definition of $p$ in \eqref{p-1} and the expression for $p'$ in \eqref{p-2}, one can easily obtain from \eqref{Theta-2} that
\begin{equation}\label{Theta-3}
|\Theta(\xi, \xi_1)| \leq C\alpha.
\end{equation}
Hence, considering $ \alpha = \dfrac{\pi}{4Ct}$ in \eqref{Theta-3}, for any fixed $t$, we can obtain
\begin{equation}\label{lb-t}
\left|  \frac{e^{it\Theta(\xi, \xi_1)}-1}{\Theta(\xi, \xi_1)}\right|\geq \dfrac{\sin(t\Theta(\xi, \xi_1) \,)}{t\Theta(\xi, \xi_1)}t\geq \cos(t\Theta(\xi, \xi_1)\,) t  \geq t\sqrt{2}/2.
\end{equation}

Now, using mean value theorem for the integrals and the lower bound \eqref{lb-t}, one can infer that
\begin{equation}\label{lb-t2}
\left| \int_{K} \frac{e^{it\Theta(\xi, \xi_1)}-1}{\Theta(\xi, \xi_1)}d\xi_1\right|\geq t|K|\sqrt{2}/2.
\end{equation}

In the set $K$, we have that $|\frac{\xi g(\xi, \xi_1)}{4\varphi(\xi)}|\sim \alpha N^2$. Using this last information along with the lower bound \eqref{lb-t2} in \eqref{eqil1}, after easy calculations one can obtain
\begin{equation}\label{eqil2}
 \| I_2(\eta_N,\eta_N,x,t)\|_{H^s}^2\gtrsim  N^{-4}N^4|K|^2\alpha t^2\gtrsim \alpha^{3} t^2\sim \dfrac{1}{t}.
\end{equation}

By construction, $\|\eta_N\|_{H^s}\to 0$ for any $s<1$, therefore, taking $\eta_0=\eta_N$,  \eqref{eqil2} ensures that for any fixed $t>0$, the application $\eta_0\mapsto I_2(\eta_0, \eta_0, t)$ is not continuous at the origin from $H^s(\R)$ to even $\mathcal{D}'(\R)$.

Now, our idea is to prove that the discontinuity of $\eta_0\mapsto I_2(\eta_0, \eta_0, t)$ at the origin implies the discontinuity of the flow-map $\eta_0\mapsto \eta(t)$.

Recall that the application that takes initial data to the solution constructed in Theorem A  is real analytic. Using analyticity of the flow-map,  there exist $T>0$ and $\epsilon_0 >0$ such that for any $|\epsilon|\leq \epsilon_0$, any $\|h\|_{H^1(\R)}\leq 1$ and $0\leq t\leq T$, one has
\begin{equation}\label{eq2.7}
\eta(\epsilon h, t) = \epsilon S(t)h + \sum_{k=2}^{+\infty} \epsilon^k I_k(h^k, t),
\end{equation}
where $h^k:= (h, h, \cdots, h)$, $h^k\mapsto I_k(h^k, t)$ is a $k$-linear continuous map from $H^1(\R)^k$ into $C([0, T]; H^1(\R))$ and the series converges absolutely in $C([0, T]; H^1(\R))$.

From \eqref{eq2.7}, we have that
\begin{equation}\label{eq2.8}
\eta(\epsilon \eta_N, t) -\epsilon^2I_2(\eta_N, \eta_N, t) = \epsilon S(t)\eta_N + \sum_{k=3}^{+\infty} \epsilon^k I_k(\eta_N^k, t).
\end{equation}

Also, we have that
\begin{equation}\label{eq2.9}
\|S(t)\eta_N\|_{H^s}\leq \|\eta_N\|_{H^s} \sim N^{s-1}
\end{equation}
and
\begin{equation}\label{eq2.10}
\Big\|\sum_{k=3}^{+\infty} \epsilon^k I_k(\eta_N^k, t)\Big\|_{H^1}\leq \Big(\frac{\epsilon}{\epsilon_0}\Big)^3 \sum_{k=3}^{+\infty} \epsilon_0^k \|I_k(\eta_N^k, t\|_{H^1}
\leq C\epsilon^3.
\end{equation}

Therefore, from \eqref{eq2.8} in view of \eqref{eq2.9} and \eqref{eq2.10} we get, for any $s<1$,
\begin{equation}\label{eq2.11}
\sup_{t\in [0, T]}\|\eta(\epsilon\eta_N, t) -\epsilon^2 I_2(\eta_N, \eta_N, t)\|_{H^s}\leq O(N^s) +C\epsilon^3.
\end{equation}

Now, if we fix $0<t<1$, take $\epsilon$ small enough and then $N$ large enough, and take an account of  \eqref{eqil2}; the estimate \eqref{eq2.11} yields that $\epsilon^2 I_2(\eta_N, \eta_N, t)$ is a good approximation of $\eta(\epsilon\eta_N, t)$ in $H^s(\R)$ for any $s<1$.

If we choose $\epsilon \ll 1$, from \eqref{eq2.8}, \eqref{eq2.9} and \eqref{eq2.10},  for a fixed $t>0$, we get 
\begin{equation}\label{eq2.12}
\begin{split}
\|\eta(\epsilon\eta_N, t)\|_{H^s}&\geq \epsilon^2\|I_2(\eta_N, \eta_N, t\|_{H^s} -\epsilon\|S(t)\eta_N\|_{H^s}-\sum_{k=3}^{+\infty} \epsilon^k \|I_k(\eta_N^k, t\|_{H^s}\\
&\geq C_0\epsilon^2 -C_1\epsilon^3 - C\epsilon N^{s-1}\\
&\geq \frac{C_0}{2}\epsilon^2 -C\epsilon N^{s-1}.
\end{split}
\end{equation}

If we fix the $0<\epsilon \ll 1$ chosen earlier and choose $N$ large enough, then for any $s<1$, the estimate \eqref{eq2.12} yields,
\begin{equation}\label{eq2.13}
\|\eta(\epsilon\eta_N, t)\|_{H^s}\geq \frac{C_0}{4} \epsilon^2.
\end{equation}

Note that, $\eta(0, t) \equiv 0$ and $\|\eta_N\|_{H^s}\to 0$ for any $s<1$. Therefore, taking $N\to \infty$ we conclude that the flow-map $\eta_0\mapsto \eta(t)$ is discontinuous at the origin from $H^s(\R)$ to $C([0, 1]; H^s(\R)$, for $s<1$. Moreover, as $\eta_N \rightharpoonup 0$ in $H^1(\R)$, we also have that the flow-map is discontinuous from $H^1(\R)$ equipped with its weak topology inducted by  $H^s(\R)$ with values even in $\mathcal{D}'(\R)$.
\end{proof}

\bigskip
{\bf Acknowledgment.}    The authors would like to thank the unanimous referee whose comments helped to improve the original manuscript.



\begin{thebibliography}{00}


\small





\bibitem{ABN1} D. M. Ambrose, J. L. Bona and D. P. Nicholls,  {\em Well-posedness of a model for water waves with viscosity},  Discrete Continuous Dynamical  Systems, Series B {\bf 17} (2012) 1113--1137.

\bibitem{ABN2} D. M. Ambrose, J. L. Bona and D. P. Nicholls,  {\em On ill-posedness of truncated series models for water waves},  Proc. Royal Soc. London, Series A {\bf 470} (2014) 1--16.

\bibitem{AB}  D. M. Ambrose, J.L. Bona and T. Milgrom, {\em  Global solutions and ill-posedness for the Kaup system and related Boussinesq systems},  Preprint.


\bibitem{BBM}  T. B. Benjamin, J. L. Bona and J. J. Mahony;   {\em Model equations for long waves in nonlinear dispersive media}, Philos. Trans. Royal Soc. London, Series A {\bf 272} (1972), 47--78.   


\bibitem{BejT} I. Bejenaru, T. Tao, {\em Sharp well-posedness and ill-posedness results for a quadratic non-linear Schr\"odinger equation}, Jr. Functional Analysis, {\bf 233} (2006) 228--259.



\bibitem{BCPS1} J. L. Bona, X. Carvajal, M.  Panthee, M. Scialom; {\em Higher-Order Hamiltonian Model for Unidirectional Water Waves},  J. Nonlinear Sci. {\bf 28} (2018) 543--577.

\bibitem{Bona} J. L. Bona; {\em Nonlinear wave phenomena}, Anais do Semin\'ario Brasileiro de An\'alise 51 SBA, (Florian\'opolis), 2000.

\bibitem{BCS1} J. L. Bona, M. Chen and J.-C. Saut; {\em Boussinesq equations and other systems for small-amplitude long waves in nonlinear dispersive media I. Derivation and linear theory}, J. Nonlinear Sci. {\bf 12} (2002) 283--318.

\bibitem{BCS2}  J. L. Bona, M. Chen and J.-C. Saut;  {\em Boussinesq equations and other systems for small-amplitude long waves in nonlinear dispersive media II. The nonlinear theory}, Nonlinearity {\bf 17} (2004) 925--952.




\bibitem{BS}  J. L. Bona and R. Smith;  {\em  The initial-value problem for the Korteweg-de Vries equation}, Philos. Trans. Royal Soc. London, Series A, {\bf  278} (1975) 555--601.

\bibitem{BSc} J. L. Bona and L. R. Scott; {\em The Korteweg-de Vries equation in fractional order Sobolev spaces}, Duke Math. J.  {\bf 43} (1976) 87--99.

\bibitem{BT} J. L. Bona and N. Tzvetkov; {\em Sharp well-posedness results for the BBM equation},  Discrete Continuous Dynamical Systems, Ser. A  {\bf 23} (2009) 1241--1252.

\bibitem{B1} J. Bourgain;  {\em Periodic Korteweg de Vries equation with measures as initial data}, Sel. math., New ser. {\bf 3} (1997) 115--159.

\bibitem{B} J. Bourgain;  {\em Refinements of Strichartz inequality and applications to 2D-NLS with critical nonlinearity}, IMRN {\bf  5} (1998) 253--283.


\bibitem{hongqiu}   H. Chen;  {\em Well-posedness for a higher-order, nonlinear, dispersive equation on a quarter plane}, Discrete Cont. Dynamical 
Systems, Ser. A, {\bf 38} (2018) 397--429.   

\bibitem{CL}   A. Constantin, D. Lannes;  {\em  The hydrodynamical relevance of the 
Camassa-Holm and Degasperis-Processi equations}, Arch. Rational Mech. Anal. 
{\bf 192} (2009) 165--186.

\bibitem{DV} V. Duch\^ene; {\em Decoupled and unidirectional asymptotic models for the propagation of internal waves}, Math. Models  Methods  Appl. Sci. {\bf 24}  (2014) 1--65.

\bibitem{dullin} H. R. Dullin, G. A. Gottwald and D. D. Holm;  {\em Camassa-Holm, 
Korteweg-de Vries-5, and other asymptotically equivalent models for 
shallow water waves}.  Fluid Dyn. Res. {\bf 33} (2003) 73--95.

\bibitem{dullin04} H. R. Dullin, G. A. Gottwald and D. D. Holm;  {\em On asymptotically equivalent shallow water wave equations}, Physica D {\bf 190} (2004) 1--14.

\bibitem{FLP1} G. Fonseca, F. Linares, G. Ponce; {\em Global well-posedness for the modified Korteweg-de Vries equation}, Communications in Partial Differential Equations {\bf 24} (1999) 683--70.

\bibitem{FLP2} G. Fonseca, F. Linares, G. Ponce; {\em Global existence for the critical generalized KdV equation},  Proc. AMS {\bf 131} (2002) 1847--1855.

\bibitem{DL} D. Lannes;  {\em The water waves problem: mathematical analysis and asymptotics},  Math. Surveys and Monographs {\bf 188} (2013) (American Math. Soc.: Providence).  

\bibitem{LP} F. Linares  and G. Ponce; {\em Introduction to Nonlinear Dispersive Equations, $2^{nd}$ ed.}, Springer, Universitext, 2015.


\bibitem{MV} L. Molinet, S. Vento, {\em Sharp ill-posedness and well-posedness results for the KdV-Burgers equation: the real line case},  Annali  Scuola Normale Superiore di Pisa {\bf 10} (2011) 531--560.



\bibitem{olver1}  P. J. Olver;  {\em  Hamiltonian and non-Hamiltonian models for water waves}, Trends and applications of pure mathematics to mechanics (Palaiseau, 1983), Lecture Notes in Phys.  {\bf 195} Springer: Berlin (1984)  273--290. 

\bibitem{olver} P. J. Olver; {\em  Hamiltonian perturbation theory and water waves}, Contemp. Math. {\bf 28} (1984) 231--249. 

\bibitem{MP-1} M. Panthee; {\em On the ill-posedness result for the BBM equation}, Discrete and Continuous Dynamical Systems {\bf 30} 1 (2011) 253--259.




\end{thebibliography}
\end{document}